\documentclass[a4paper,11pt]{article}

\setlength{\parindent}{0pt}
\setlength{\parskip}{3pt}
\setlength{\unitlength}{1cm}

\usepackage[top=2.5cm,bottom=2.5cm]{geometry}
\usepackage{amsmath}
\usepackage{amsfonts}
\usepackage{amssymb}
\usepackage{amsthm}

\newtheorem{theorem}{Theorem}[section]
\newtheorem{lemma}[theorem]{Lemma}
\newtheorem{corollary}[theorem]{Corollary}
\newtheorem{proposition}[theorem]{Proposition}
\newtheorem{conjecture}[theorem]{Conjecture}

\theoremstyle{definition}
\newtheorem{definition}[theorem]{Definition}
\newtheorem{examples}[theorem]{Examples}
\newtheorem{remark}[theorem]{Remark}
\newtheorem{remarks}[theorem]{Remarks}

\newcommand{\N}{\mathbb{N}}

\newcommand{\R}{\mathbb{R}}

\newcommand{\p}{\mathfrak{p}}
\newcommand{\enum}[1]{#1_{1}, \ldots, #1_{n}}
\newcommand{\enumi}[2]{#1_{1}, \ldots, #1_{#2}}
\newcommand{\set}[2]{\{ #1 \mid #2 \}}
\newcommand{\bqf}[2]{\langle #1, #2 \rangle}
\newcommand{\OR}{\mathcal{O}}
\newcommand{\m}{\mathfrak{m}}

\newcommand{\Quot}[1]{\textrm{Quot}(#1)}
\newcommand{\Sper}[1]{\textrm{Sper} \, #1}
\newcommand{\supp}[1]{\textrm{supp}(#1)}
\newcommand{\cent}[1]{\textrm{cent}(#1)}
\newcommand{\SA}[1]{\textrm{SA}(#1)}
\newcommand{\PW}[1]{\textrm{PW}(#1)}
\newcommand{\ord}[1]{\textrm{ord}_{#1}}

\bibliographystyle{alpha}

\title{On the Pierce-Birkhoff Conjecture for Smooth Affine Surfaces over Real Closed Fields
\thanks{submitted for publication on March 31, 2008}}

\author{Sven Wagner}

\date{}

\begin{document}

\maketitle

\begin{abstract}

	\noindent We will prove that the Pierce-Birkhoff Conjecture holds for non-singular two-dimen\-sional affine real algebraic varieties over real closed fields, i.e., if $W$ is such a variety, then every piecewise polynomial function on $W$ can be written as suprema of infima of polynomial functions on $W$. More precisely, we will give a proof of the so-called Connectedness Conjecture for the coordinate rings of such varieties, which implies the Pierce-Birkhoff Conjecture.

\end{abstract}

\section{Introduction}

In 1956, G.~Birkhoff and R.~S.~Pierce raised the following question, which is well-known today as the \textbf{Pierce-Birkhoff Conjecture}:

\begin{conjecture}{(Pierce, Birkhoff)}

	Let $t\colon \R^{n} \rightarrow \R$ be a piecewise polynomial function. Then there is a finite family of polynomials $\{ h_{ij} \}_{i \in I,j \in J} \subset \R[\enum{X}]$ such that
	\[ t = \sup_{i \in I} \big{(} \inf_{j \in J} h_{ij} \big{)}. \]

\end{conjecture}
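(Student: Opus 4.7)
My strategy is to reduce the Pierce-Birkhoff Conjecture for $\R^n$ to Madden's Connectedness Conjecture for the polynomial ring $\R[\enum{X}]$, and then to establish the latter via the geometry of the real spectrum and valuation theory. The reduction is purely formal and due to Madden: the Pierce-Birkhoff property for a ring $A$ follows from the statement that, for every pair $\alpha, \beta \in \Sper{A}$, the set
\[ C(\alpha, \beta) = \{\, \gamma \in \Sper{A} \mid f(\gamma) \geq 0 \text{ whenever } f(\alpha) \geq 0 \text{ and } f(\beta) \geq 0 \,\} \]
is connected in the spectral topology. Applied to $A = \R[\enum{X}]$, this replaces the sup-inf problem on $\R^n$ by a topological question about $\Sper{A}$, since piecewise polynomials on $\R^n$ correspond to continuous sections over $\Sper{A}$ that are polynomial on each closed constructible piece.

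Second, I would analyze $C(\alpha, \beta)$ locally through the separating ideal $\bqf{\alpha}{\beta}$, generated by all polynomials with opposite signs at $\alpha$ and $\beta$. Its zero locus in $\Sper{A}$ is the only place where a continuous sign-consistent path from $\alpha$ to $\beta$ can fail to exist; off that locus one constructs the path by a direct continuity argument. The problem therefore localizes at common specializations $\gamma$ of $\alpha$ and $\beta$, and at each such $\gamma$ the behaviour of signs on the residue field is governed by a canonical real valuation $v_\gamma$: the possible signs of any polynomial along refinements of $\gamma$ are determined by its initial form with respect to $v_\gamma$.

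Third, I would proceed by induction on $n$, using resolution of singularities over $\R$ (iterated blow-ups along smooth centers, which are available here because the ambient scheme is smooth) to reduce the local picture at each such $\gamma$ to one in which $\bqf{\alpha}{\beta}$ is monomial in a regular system of parameters. On such a monomial model the sign patterns realised by neighbouring prime cones are parametrised by a toric-combinatorial object on which one can exhibit an explicit connecting path; pushing this local connectedness down through the blow-up sequence, combined with a patching argument on the valuative tree, would then yield the global statement.

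The main obstacle is the step from monomial local models back to $\R[\enum{X}]$ when $n \geq 3$. In that range, real valuations of rank $\geq 2$ with rationally independent value groups appear, and the clean picture of branches at a point that drives the two-dimensional argument (and that is the content of the present paper) no longer exhausts the possible specialization patterns. The key new ingredient required for the general case is therefore a structural description of higher-rank real valuations on $\R(\enum{X})$ sufficient to run the monomialization-and-patching step uniformly in $n$; producing this description is where I expect the bulk of the work to be concentrated, and it is for this reason that the present paper confines itself to the surface case.
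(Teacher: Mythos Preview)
The statement you are attempting to prove is the full Pierce--Birkhoff Conjecture for all $n$, and neither the paper nor your proposal proves it. The paper explicitly does \emph{not} claim a proof of Conjecture~1.1 in general; it establishes only the two-dimensional case (more precisely, the Connectedness Conjecture for coordinate rings of smooth affine surfaces over real closed fields). For $n\geq 3$ the conjecture remains open, as the introduction states. Your proposal is likewise not a proof but an outline of a research program, and you say so yourself in the final paragraph: the ``key new ingredient required for the general case'' --- a structural description of higher-rank real valuations sufficient to carry out monomialization-and-patching uniformly in $n$ --- is precisely what no one currently knows how to supply. A plan that ends with ``this is where I expect the bulk of the work to be concentrated'' is not a proof.

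Beyond the overarching gap, several of the intermediate formulations are not quite right. First, the reduction you attribute to Madden is not his: Madden introduced separating ideals and the local criterion $t_\alpha - t_\beta \in \bqf{\alpha}{\beta}$, but the Connectedness Conjecture and the proof that it implies Pierce--Birkhoff are due to Lucas, Madden, Schaub and Spivakovsky. Second, the connectedness statement you write down is not theirs either. Their condition is that for any finite set $g_1,\ldots,g_s \in A\setminus\bqf{\alpha}{\beta}$ there exists \emph{some} connected $C\subset\Sper{A}$ containing $\alpha,\beta$ and disjoint from each $\{g_j=0\}$; your set $C(\alpha,\beta)=\{\gamma : f(\gamma)\geq 0 \text{ whenever } f(\alpha)\geq 0 \text{ and } f(\beta)\geq 0\}$ is a different object, and you give no argument that its connectedness is equivalent to (or implies) the actual Connectedness Conjecture, nor that it implies Pierce--Birkhoff. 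Third, even in dimension two the paper does not proceed via ``resolution of singularities'' in the usual sense but via iterated quadratic transforms of the \emph{separating ideal} along the valuation $v_\alpha$, using the Alvis--Johnston--Madden theorem that $\bqf{\alpha}{\beta}$ is simple and transforms to the maximal ideal after finitely many steps; your sketch does not engage with this mechanism at all.
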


The statement of this conjecture depends on the following

\begin{definition}

	A function $t\colon \R^{n} \rightarrow \R$ is said to be piecewise polynomial if there are closed semialgebraic subsets $\enumi{U}{m}$ of $\R^{n}$ and polynomials $\enumi{t}{m} \in \R[\enum{X}]$ such that $\R^{n} = \bigcup\limits_{k = 1}^{m} U_{k}$ and $t = t_{k}$ on $U_{k}$.

\label{ppf}
\end{definition}

This conjecture was proved in 1984 by Louis~Mah\'e in the case $n = 2$ (see \cite{Mah1}). For $n > 2$, it is still open.

In 1989, J.~J.~Madden formulated the Pierce-Birkhoff Conjecture for $\R^{n}$ in terms of the real spectrum of the polynomial ring $\R[\enum{X}]$. In doing so, he introduced the concept of a ``separating ideal''. He used separating ideals to define a property that makes sense for any commutative ring and he showed that this property is satisfied by $\R[\enum{X}]$ for $n=1,2,3,\ldots$ if and only if the Pierce-Birkhoff Conjecture is true. In 1995, D.~Alvis, B.~L.~Johnston and J.~J.~Madden applied Zariski's theory of quadratic transformations to study separating ideals in regular local domains, showing that quadratic transforms of separating ideas are well-behaved if the residue field is real closed. In 2007, F.~Lucas, J.~J.~Madden, D.~Schaub and M.~Spivakovsky introduced the Connectedness Conjecture and showed that it implies the Pierce-Birkhoff Conjecture.

In the present paper, we give a proof of the Connectedness Conjecture for the coordinate ring of any non-singular two-dimensional affine real algebraic variety over a real closed field. Of course, this implies the Pierce-Birkhoff Conjecture for such rings. Our proof rests on the theory developed by Madden and by Alvis, Johnston and Madden. In an attempt to make the present paper self-contained, we include a review the essential results of this theory.

The author's work is supported by the DFG-Graduiertenkolleg ``Mathemati\-sche Logik und Anwendungen'' (GRK 806) in Freiburg (Germany). He thanks Prof. Dr. Alexander~Prestel from the University of Konstanz for supervising his work and the referee for useful comments.

\section{Pierce-Birkhoff Rings}

In the next two sections, we will give a short overview of Madden's article \cite{Madd1} and we include a proof of one of the main results.

Let $A$ be a (commutative) ring (with $1$). Denote by $\Sper{A}$ the real spectrum of $A$. Let $\alpha \in \Sper{A}$. Then $\alpha$ induces a total ordering on $A(\alpha) := A / \supp{\alpha}$, where $\supp{\alpha} = \alpha \cap -\alpha$. We denote by $\rho_{\alpha}$ the homomorphism $A \rightarrow A / \supp{\alpha}$ and by $R(\alpha)$ the real closure of the quotient field $k(\alpha)$ of $A(\alpha)$ with respect to the ordering induced by $\alpha$.

\begin{definition}{(Schwartz)}

An element $s \in \prod_{\alpha \in \Sper{A}} R(\alpha)$ is called an \textbf{abstract semialgebraic function} on $\Sper{A}$ iff the image of $s$ is constructible, i.e., there exists a formula $\Phi(T)$ in the language of ordered rings with coefficients in $A$ containing only one variable, which is also free, such that, for all $\alpha \in \Sper{A}$, $\Phi_{\alpha}(s(\alpha))$ holds in $R(\alpha)$ and $s(\alpha)$ is the only solution of the specialization $\Phi_{\alpha}(T)$ in $R(\alpha)$.  We say that $s$ is \textbf{continuous} if it  satisfies the following compatibility condition regarding specializations:\\
		Let $\alpha, \beta \in \Sper{A}$, $\beta$ a specialization of $\alpha$. Then $R(\alpha)$ contains a largest convex subring $C_{\beta \alpha}$ with maximal ideal $M_{\beta \alpha}$ such that $A(\alpha) \subset C_{\beta \alpha}$ and $\rho_{\alpha}^{-1}(M_{\beta \alpha}) = \supp{\beta}$. Then $C_{\beta \alpha} / M_{\beta \alpha}$ is a real closed field containing $A(\beta)$ and therefore also $R(\beta)$. The condition $s$ has to suffice is \[ \lambda_{\beta \alpha}(s(\alpha)) = s(\beta) \in R(\beta) \subset C_{\beta \alpha} / M_{\beta \alpha}, \] where $\lambda_{\beta \alpha}\colon C_{\beta \alpha} \rightarrow C_{\beta \alpha} / M_{\beta \alpha}$.
	
We denote by $\SA{A}$ the set of all continuous abstract semialgebraic functions on $\Sper{A}$. This is a subring of $\prod_{\alpha \in \Sper{A}} R(\alpha)$.

\end{definition}

\begin{remarks}$ $

	\begin{enumerate}
	
		\item[(i)]{Every element $a \in A$ induces an abstract semialgebraic function.}
		\item[(ii)]{For $s,t \in \SA{A}$, the set $\set{\alpha \in \Sper{A}}{s(\alpha) = t(\alpha)} \subset \Sper{A}$ is constructible.}
	
	\end{enumerate}

\end{remarks}

\begin{definition}

	By \textbf{$\PW{A}$}, we denote the set of all continuous abstract semialgebraic functions $s \in \SA{A}$ with the property that, for all $\alpha \in \Sper{A}$, there exists an element $a \in A$ such that $s(\alpha) = a(\alpha)$.
	
	The elements of $\PW{\R[\enum{X}]}$ are called \textbf{abstract piecewise polynomial functions}.

\end{definition}

An abstract piecewise polynomial function has a presentation analogous to that of a piecewise polynomial as in Definition \ref{ppf}. To be precise, suppose $t \in \PW{A}$.  Let $\alpha \in \Sper{A}$. If $t(\alpha) = a(\alpha)$ for some $a \in A$, then, $t = a$ on a constructible set $U_{\alpha}$ containing $\alpha$. By compactness, we have $\Sper{A} = \bigcup\limits_{i = 1}^{m} U_{i}$ for finitely many constructible sets $U_{i}$ such that $t = a_{i}$ on $U_{i}$ for some $a_{i} \in A$. Since $\set{\alpha \in \Sper{A}}{t(\alpha) = a_{i}(\alpha)}$ is closed in the spectral topology, we may assume that each $U_{i}$ is closed.

\begin{definition}{(Madden)}

	Suppose $t \in \SA{A}$. We say $t$ is \textbf{sup-inf-definable over $A$} if there is a finite family $\{ h_{ij} \}_{i \in I,j \in J} \subset A$ such that
	\[ t = \sup_{i \in I} \big{(} \inf_{j \in J} h_{ij} \big{)}. \] 
	$A$ is called a \textbf{Pierce-Birkhoff ring} if every $t \in \PW{A}$ is sup-inf-definable over $A$.

\label{PBr}
\end{definition}

Let $R$ be a real closed field, and let $V$ be an algebraic subset of $R^{n}$. We denote by $\mathcal{P}(V)$ its coordinate ring. Then $\PW{\mathcal{P}(V)}$ is isomorphic to the ring of piecewise polynomial functions on $V$. Hence the question whether the Pierce-Birkhoff Conjecture holds for $V$ is equivalent to the question whether $\mathcal{P}(V)$ is a Pierce-Birkhoff ring.

\section{Separating Ideals}

The present section summarizes the main results of \cite{Madd1}, including the definition of separating ideals and their use in describing the abstract Pierce-Birkhoff property. First, using the abstract Pierce-Birkhoff property (Definition \ref{PBr}) together with the compactness of the constructible topology of the real spectrum, we show that an abstract piecewise polynomial is globally sup-inf-definable if it is sup-inf-definable on every pair of elements of the real spectrum. This shows that the Pierce-Birkhoff property is local in a very strong sense.

\begin{theorem}

	$A$ is a Pierce-Birkhoff ring if and only if, for all $t \in \PW{A}$ and for all $\alpha,\beta \in \Sper{A}$, there is an element $h \in A$ such that $h(\alpha) \ge t(\alpha)$ and $h(\beta) \le t(\beta)$.

\end{theorem}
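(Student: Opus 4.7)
The ``only if'' direction I would dispatch directly. Given a representation $t = \sup_{i \in I} \inf_{j \in J} h_{ij}$ with all $h_{ij} \in A$, and given $\alpha, \beta \in \Sper{A}$, the finiteness of $I$ and the totality of the ordering on $R(\alpha)$ yield an index $i_{0} \in I$ with $t(\alpha) = \inf_{j} h_{i_{0}j}(\alpha)$; finiteness of $J$ then yields $j_{0} \in J$ with $h_{i_{0}j_{0}}(\beta) = \inf_{j} h_{i_{0}j}(\beta)$. The single element $h := h_{i_{0}j_{0}} \in A$ then satisfies $h(\alpha) \ge \inf_{j} h_{i_{0}j}(\alpha) = t(\alpha)$ and $h(\beta) = \inf_{j} h_{i_{0}j}(\beta) \le t(\beta)$.

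For the ``if'' direction I plan two nested compactness arguments in the constructible topology of $\Sper{A}$. As a preliminary observation, for any $h \in A$ the set $\set{\gamma \in \Sper{A}}{h(\gamma) \ge t(\gamma)}$ is constructible: fix a piecewise presentation $\Sper{A} = U_{1} \cup \cdots \cup U_{m}$ with each $U_{k}$ closed constructible and $t|_{U_{k}} = a_{k}|_{U_{k}}$ for some $a_{k} \in A$ (as described after the definition of $\PW{A}$); then the set in question equals $\bigcup_{k} (U_{k} \cap \set{\gamma}{h(\gamma) \ge a_{k}(\gamma)})$. The analogue with $\le$ holds too, and both statements extend, by finite union and intersection, to pointwise maxima and minima of finitely many elements of $A$.

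I would then fix $\beta \in \Sper{A}$ and, for each $\alpha$, invoke the hypothesis to obtain $h_{\alpha,\beta} \in A$ with $h_{\alpha,\beta}(\alpha) \ge t(\alpha)$ and $h_{\alpha,\beta}(\beta) \le t(\beta)$. The constructible sets $\set{\gamma}{h_{\alpha,\beta}(\gamma) \ge t(\gamma)}$ cover $\Sper{A}$ as $\alpha$ ranges, and compactness of the constructible topology extracts finitely many $\alpha_{1}, \ldots, \alpha_{n_{\beta}}$; the pointwise function $g_{\beta} := \max_{i} h_{\alpha_{i},\beta}$ then satisfies $g_{\beta} \ge t$ everywhere while $g_{\beta}(\beta) = t(\beta)$. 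A second application of compactness to the cover of $\Sper{A}$ by the constructible sets $\set{\gamma}{g_{\beta}(\gamma) \le t(\gamma)}$ ($\beta \in \Sper{A}$) yields $\beta_{1}, \ldots, \beta_{m}$ with $t = \min_{j} \max_{i} h_{\alpha_{i}^{(j)}, \beta_{j}}$ pointwise. Distributivity of $\min$ and $\max$ over the totally ordered fields $R(\gamma)$ then converts this inf-sup expression into the sup-inf form required by Definition \ref{PBr}.

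The main obstacle I expect is purely bookkeeping: verifying constructibility of the comparison sets when $t$ is only piecewise in $A$, and coordinating the two nested compactness arguments. The piecewise presentation reduces everything to finitely many constructible comparisons among elements of $A$, after which the compactness step and the lattice manipulation are routine.
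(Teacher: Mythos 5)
Your proposal is correct and takes essentially the same approach as the paper: both directions match, and the ``if'' part is the same double compactness argument in the constructible topology of $\Sper{A}$, with constructibility of the comparison sets obtained from the piecewise presentation of $t$. The only difference is that you run the two compactness steps in the dual order, producing an inf-sup expression that you then convert by distributivity in the chains $R(\gamma)$, whereas the paper fixes $\alpha$ first (taking $H_{\alpha} = \inf_j h_{\alpha\beta_j} \le t$ with equality near $\alpha$) so that the sup-inf form appears directly.
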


\begin{proof}

	Let $t \in \PW{A}$.
	
	Suppose there is a finite family $\{ h_{ij} \}_{i \in I,j \in J} \subset A$ such that
	\[ t = \sup_{i \in I} \big{(} \inf_{j \in J} h_{ij} \big{)}, \]
	and suppose further that there exist $\alpha, \beta \in \Sper{A}$ such that, for all $h \in A$, $h(\alpha) \ge t(\alpha)$ implies $h(\beta) > t(\beta)$. Then, since there is some $i_{0} \in I$ such that $\inf_{j \in J} h_{i_{0}j}(\alpha) \ge t(\alpha)$, we have $\sup_{i \in I} \big{(} \inf_{j \in J} h_{ij}(\beta) \big{)} > t(\beta)$, a contradiction.
	
	Suppose now that, for all $\alpha,\beta \in \Sper{A}$, there is an element $h_{\alpha \beta} \in A$ such that $h_{\alpha \beta}(\alpha) \ge t(\alpha)$ and $h_{\alpha \beta}(\beta) \le t(\beta)$. In particular, $h_{\alpha \alpha}(\alpha) = t(\alpha)$.
	
	For each $\alpha, \beta \in \Sper{A}$, there are closed constructible sets $U(\alpha,\beta)$ and $V(\alpha,\beta)$ such that $\alpha \in U(\alpha,\beta)$, $\beta \in V(\alpha,\beta)$ , $h_{\alpha \beta} \ge t$ on $U(\alpha,\beta)$ and $h_{\alpha \beta} \le t$ on $V(\alpha,\beta)$. Without loss of generality $U(\alpha,\alpha) = V(\alpha,\alpha)$.
	
	We fix some $\alpha \in \Sper{A}$. Since $\Sper{A}$ is the union of the sets $V(\alpha,\beta)$ with $\beta \in \Sper{A}$, by compactness, there exist $\alpha = \beta_{0}, \ldots, \beta_{r} \in \Sper{A}$ such that $\Sper{A} = \bigcup\limits_{j = 0}^{r} V(\alpha,\beta_{j})$. Let $U(\alpha) := \bigcap\limits_{j = 0}^{r} U(\alpha,\beta_{j})$ and $H_{\alpha} := \inf_{j \in J} h_{\alpha,\beta_{j}}$, where $J = \{ 0, \ldots, r \}$. Then $H_{\alpha} \le t$ globally and $H_{\alpha} = t$ on $U(\alpha)$.
	
	$\Sper{A}$ is the union of the sets $U(\alpha)$ with $\alpha \in \Sper{A}$, and hence, again by compactness, there exist $\alpha_{1}, \ldots, \alpha_{s} \in \Sper{A}$ such that $\Sper{A} = \bigcup\limits_{i = 0}^{s} U(\alpha_{i})$. Then $t = \sup_{i \in I} H_{\alpha_{i}}$, where $I = \{ 1, \ldots, s \}$.

\end{proof}

\begin{definition}

	Let $A$ be a ring. For $\alpha,\beta \in \Sper{A}$, we denote by $\bqf{\alpha}{\beta}$ the ideal of $A$ generated by all $a \in A$ with the property $a(\alpha) \ge 0$ and $a(\beta) \le 0$. We will call $\bqf{\alpha}{\beta}$ the \textbf{separating ideal} of $\alpha$ and $\beta$.

\end{definition}

\begin{remarks}

	Let $\alpha,\beta \in \Sper{A}$.
	
	\begin{enumerate}
	
		\item{$\supp{\alpha} + \supp{\beta} \subset \bqf{\alpha}{\beta}$.}
		\item{In general, $\bqf{\alpha}{\beta}$ is not a prime ideal.}
	
	\end{enumerate}

\end{remarks}

\begin{lemma}

	Let $\alpha,\beta \in \Sper{A}$. Let $a \in A$ such that $a(\alpha) \ge 0$. Then we have $a \in \bqf{\alpha}{\beta}$ if and only if there exists an $h \in A$ such that $a(\alpha) \le h(\alpha)$ and $h(\beta) \le 0$.

\end{lemma}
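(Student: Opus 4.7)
My plan is to handle the two implications separately, since they rest on rather different ideas: the forward direction requires a squaring trick, while the reverse direction is essentially immediate once one allows a case split on the sign of $a(\beta)$.

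For the ($\Rightarrow$) direction, I would begin with a presentation
\[ a = \sum_{i=1}^{n} c_{i} b_{i} \]
with $c_{i} \in A$ and each $b_{i}$ a generator of $\bqf{\alpha}{\beta}$, so $b_{i}(\alpha) \geq 0$ and $b_{i}(\beta) \leq 0$. The candidate for the dominating element will be
\[ h := \sum_{i=1}^{n} (c_{i}^{2} + 1) b_{i} \in A. \]
At $\beta$, every coefficient $c_{i}(\beta)^{2} + 1$ is strictly positive while $b_{i}(\beta) \leq 0$, so $h(\beta) \leq 0$ termwise. At $\alpha$, the difference
\[ h(\alpha) - a(\alpha) = \sum_{i=1}^{n} \bigl( c_{i}(\alpha)^{2} - c_{i}(\alpha) + 1 \bigr) b_{i}(\alpha) \]
is nonnegative, because $x^{2} - x + 1 = (x - 1/2)^{2} + 3/4$ is strictly positive in any real closed field and $b_{i}(\alpha) \geq 0$. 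Hence $h(\alpha) \geq a(\alpha)$, as required.

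For the ($\Leftarrow$) direction, given such an $h$, I would split into two cases according to the sign of $a(\beta)$ in the totally ordered field $R(\beta)$. If $a(\beta) \leq 0$, then $a$ itself satisfies $a(\alpha) \geq 0$ and $a(\beta) \leq 0$, so it is a generator of $\bqf{\alpha}{\beta}$ and we are done immediately. If instead $a(\beta) > 0$, then $h$ is a generator (since $h(\alpha) \geq a(\alpha) \geq 0$ and $h(\beta) \leq 0$), and so is $h - a$, because $(h - a)(\alpha) \geq 0$ by hypothesis while $(h - a)(\beta) = h(\beta) - a(\beta) < 0$. The identity $a = h - (h - a)$ then exhibits $a$ as a difference of two elements of $\bqf{\alpha}{\beta}$.

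I expect the main obstacle to be locating the right substitution in the forward direction: the naive choice $c_{i}^{2}$ controls the sign at $\beta$ but does not secure the inequality at $\alpha$, and only after shifting to $c_{i}^{2} + 1$ does the positivity of $x^{2} - x + 1$ kick in. Once this substitution is found, everything reduces to elementary bookkeeping, and the reverse direction requires no more than the case analysis above, which is admissible because the pair $(\alpha, \beta)$ is fixed throughout.
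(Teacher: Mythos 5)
Your proof is correct. The paper states this lemma without proof (it is part of the summary of Madden's article), and your argument --- the termwise bound $c_i(\alpha) \le c_i(\alpha)^2 + 1$ for the forward direction, and for the converse the case split on the sign of $a(\beta)$ together with writing $a = h - (h-a)$ as a difference of generators --- is precisely the standard one, so there is nothing substantive to compare.
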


\begin{definition}

	Let $\alpha \in \Sper{A}$. An ideal $I$ of $A$ is called \textbf{$\alpha$-convex} if, for all $a,b \in \alpha$, it follows from $a + b \in I$ that $a,b \in I$. The set of $\alpha$-ideals of $A$ is totally ordered by inclusion. If $A$ is noetherian, there exists a largest proper $\alpha$-convex ideal in $A$, called the \textbf{center} of $\alpha$ in $A$, and denoted by $\cent{\alpha}$.

\end{definition}

\begin{proposition}

	Let $\alpha,\beta \in \Sper{A}$.

	\begin{enumerate}
	
		\item[a)]{In $A$, the ideal $\bqf{\alpha}{\beta}$ is convex with respect to $\alpha$ and $\beta$.}
		\item[b)]{Both $\alpha$ and $\beta$ induce the same total ordering on $A / \bqf{\alpha}{\beta}$, and $\bqf{\alpha}{\beta}$ is the smallest ideal of $A$ with this property.}
		\item[c)]{If $\sqrt{\bqf{\alpha}{\beta}}$ is proper, then it is prime, and $\alpha$ and $\beta$ induce the same total ordering on $A / \sqrt{\bqf{\alpha}{\beta}}$. Thus, in this case, $\sqrt{\bqf{\alpha}{\beta}}$ together with this order is the least common specialization $\gamma$ of $\alpha$ and $\beta$ in $\Sper{A}$.}
		\item[d)]{Let $t \in \PW{A}$. For $\delta \in \Sper{A}$, we denote by $t_{\delta}$ any element $a \in A$ such that $t(\delta) = a(\delta)$. The compatibility condition for $t$ gives us $t_{\alpha}(\gamma) = t_{\gamma}(\gamma) = t_{\beta}(\gamma)$, hence $t_{\alpha} - t_{\beta} \in \sqrt{\bqf{\alpha}{\beta}}$.}
		\item[e)]{Every ideal of $A$ containing $\bqf{\alpha}{\beta}$ is $\alpha$-convex if and only if it is $\beta$-convex.}
		\item[f)]{Suppose $\alpha$ and $\beta$ have no common specialization. Then $\bqf{\alpha}{\beta} = A$.}
		\item[g)]{Suppose $A$ is noetherian and $\cent{\alpha} \neq \cent{\beta}$. Then $\bqf{\alpha}{\beta} = A$, since otherwise both $\cent{\alpha}$ and $\cent{\beta}$ would be $\alpha$- and $\beta$-convex, and therefore equal because of their maximality.}
	
	\end{enumerate}

\end{proposition}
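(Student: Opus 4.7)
I will address the seven parts roughly in order, making essential use of the preceding Lemma and of the totally-ordered chain structure of $\alpha$-convex ideals stated in the earlier Definition.

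For (a), given $u, v \in \alpha$ with $u + v \in \bqf{\alpha}{\beta}$, the Lemma produces $h \in A$ with $(u+v)(\alpha) \le h(\alpha)$ and $h(\beta) \le 0$; then $0 \le u(\alpha) \le (u+v)(\alpha) \le h(\alpha)$, so the Lemma in its converse direction yields $u \in \bqf{\alpha}{\beta}$, and similarly $v$. The symmetry $\bqf{\alpha}{\beta} = \bqf{\beta}{\alpha}$ (obtained by negating generators) upgrades this to $\beta$-convexity. For (b), any generator $a$ of $\bqf{\alpha}{\beta}$ satisfies $a(\alpha) \ge 0 \ge a(\beta)$, so its class in $A/\bqf{\alpha}{\beta}$ is both $\ge 0$ (under the ordering induced by $\alpha$) and $\le 0$ (under the one induced by $\beta$); this forces the two induced orderings to coincide, and conversely any ideal on whose quotient the two orderings agree must contain every such generator, giving the minimality.

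Part (c) is the crux. For convexity of $\sqrt{\bqf{\alpha}{\beta}}$, suppose $u, v \in \alpha$ and $(u+v)^n \in \bqf{\alpha}{\beta}$; the identity $(u+v)^n = u^n + r$ with $r = \sum_{k<n} \binom{n}{k} u^k v^{n-k} \in \alpha$ presents $(u+v)^n$ as the sum of two elements of $\alpha$, so the $\alpha$-convexity of $\bqf{\alpha}{\beta}$ from (a) places $u^n$ in $\bqf{\alpha}{\beta}$, whence $u \in \sqrt{\bqf{\alpha}{\beta}}$; symmetrically for $v$ and for $\beta$-convexity. For primality I invoke the chain structure: the $\alpha$-convex primes containing $\bqf{\alpha}{\beta}$ form a chain, and a standard $\alpha$-convex-hull argument shows that every minimal prime over $\bqf{\alpha}{\beta}$ is itself $\alpha$-convex, so there is a unique minimal prime, necessarily equal to $\sqrt{\bqf{\alpha}{\beta}}$. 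The induced orderings coincide on the quotient by (b), defining a point $\gamma \in \Sper{A}$. For the least-common-specialization claim, any common specialization $\delta$ of $\alpha$ and $\beta$ has prime support that is both $\alpha$- and $\beta$-convex with coinciding induced orderings, so by (b) contains $\bqf{\alpha}{\beta}$ and hence its radical; thus $\gamma$ specializes to $\delta$.

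Parts (d)--(g) are formal consequences. Part (d) applies the compatibility condition of $t \in \PW{A}$ to the specializations $\alpha \to \gamma$ and $\beta \to \gamma$ produced by (c), giving $t_{\alpha} \equiv t_{\gamma} \equiv t_{\beta}$ modulo $\supp{\gamma} = \sqrt{\bqf{\alpha}{\beta}}$. For (e), pass to $A/\bqf{\alpha}{\beta}$: by (b) the orderings induced by $\alpha$ and $\beta$ there agree, so for ideals containing $\bqf{\alpha}{\beta}$, $\alpha$-convexity and $\beta$-convexity coincide. Part (f) is the contrapositive of (c): $\bqf{\alpha}{\beta}$ proper implies $\sqrt{\bqf{\alpha}{\beta}}$ proper, which yields a common specialization. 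For (g), assume $\bqf{\alpha}{\beta} \ne A$; then $\bqf{\alpha}{\beta} \subseteq \cent{\alpha}$ by maximality of $\cent{\alpha}$ among proper $\alpha$-convex ideals, so by (e) $\cent{\alpha}$ is also $\beta$-convex, whence $\cent{\alpha} \subseteq \cent{\beta}$ by the corresponding maximality of $\cent{\beta}$; symmetry gives $\cent{\alpha} = \cent{\beta}$, contradicting the hypothesis.

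The main obstacle is the primality step in (c). Convex ideals in rings equipped with an ordering need not be prime in general --- the induced positive cone on the quotient can fail to have prime support when the ring has zero divisors --- so one must combine the radicalness of $\sqrt{\bqf{\alpha}{\beta}}$ with the chain structure of the $\alpha$-convex ideals, verifying that every minimal prime over $\bqf{\alpha}{\beta}$ is itself $\alpha$-convex before concluding that there is a unique such minimal prime. Once (c) is in hand, all remaining items are short deductions from it and from (b).
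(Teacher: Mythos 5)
Your argument is correct, but note that the paper itself offers no proof of this Proposition: it is stated as part of the summary of Madden's article \cite{Madd1}, with only the inline justifications in parts d) and g) (which you reproduce), so the comparison can only be with the standard treatment --- and your route is essentially that one: the preceding Lemma for a), the observation that any sign-changing element is a generator (hence zero in the quotient) for b), and the binomial expansion for convexity of the radical in c). The one place where you lean on an unproved black box is the primality step in c). The fact you invoke --- every minimal prime over an $\alpha$-convex ideal is $\alpha$-convex --- is true, but you should either prove it (if $\p$ is minimal over $I := \bqf{\alpha}{\beta}$ and $a,b \in \alpha$ with $a+b \in \p$, choose $s \notin \p$ and $n$ with $s(a+b)^{n} \in I$; then $s^{2}(a+b)^{2n} \in I$ is a sum of elements of $\alpha$, one of which is $s^{2}a^{2n}$, so convexity gives $s^{2}a^{2n} \in I \subseteq \p$, and primality of $\p$ gives $a \in \p$) or bypass it entirely: once you know $\sqrt{\bqf{\alpha}{\beta}}$ is a proper radical $\alpha$-convex ideal, primality is immediate, since for $ab \in \sqrt{\bqf{\alpha}{\beta}}$ one may adjust signs so that $0 \le a(\alpha) \le b(\alpha)$, and then $a^{2}(\alpha) \le (ab)(\alpha)$, so writing $ab = a^{2} + (ab - a^{2})$ as a sum of two elements of $\alpha$ and applying convexity yields $a^{2} \in \sqrt{\bqf{\alpha}{\beta}}$, hence $a \in \sqrt{\bqf{\alpha}{\beta}}$; this shortcut avoids the chain-of-minimal-primes detour. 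Your verification in c) that $\gamma$ is the least common specialization (supports of common specializations are convex and carry the common pushed-forward ordering, hence contain $\bqf{\alpha}{\beta}$ and therefore its radical), and your deductions of d)--g) from b), c) and e), are all sound; only the terse step in b) (``this forces the two induced orderings to coincide'') deserves one more sentence, namely that any class $\ge 0$ for $\alpha$ has a representative $a$ with $a(\alpha)\ge 0$, and either $a(\beta)\ge 0$ or $a$ is a generator and the class is zero.
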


\begin{theorem}

	$A$ is a Pierce-Birkhoff ring if and only if for all $t \in \PW{A}$ and all $\alpha,\beta \in \Sper{A}$, we have $t_{\alpha} - t_{\beta} \in \bqf{\alpha}{\beta}$.

\label{PBRCond}
\end{theorem}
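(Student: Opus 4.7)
The plan is to reduce to the previous theorem, which says $A$ is a Pierce-Birkhoff ring if and only if for every $t \in \PW{A}$ and every pair $\alpha, \beta \in \Sper{A}$ there exists $h \in A$ with $h(\alpha) \ge t(\alpha)$ and $h(\beta) \le t(\beta)$. It suffices, then, to show that for each fixed $t,\alpha,\beta$ this ``existence-of-$h$'' condition is equivalent to $t_{\alpha} - t_{\beta} \in \bqf{\alpha}{\beta}$. The key technical tool is the preceding lemma, which characterises, for $a \in A$ with $a(\alpha) \ge 0$, membership in $\bqf{\alpha}{\beta}$ by the existence of some $h \in A$ satisfying $a(\alpha) \le h(\alpha)$ and $h(\beta) \le 0$.

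For the direction ``$t_{\alpha} - t_{\beta} \in \bqf{\alpha}{\beta}$ implies the existence of a suitable $h$'', I set $a := t_{\alpha} - t_{\beta}$ and split on the sign of $a(\alpha)$. When $a(\alpha) \ge 0$, the lemma produces $f \in A$ with $f(\alpha) \ge a(\alpha)$ and $f(\beta) \le 0$; then $h := t_{\beta} + f$ works, since $h(\alpha) \ge t_{\beta}(\alpha) + a(\alpha) = t_{\alpha}(\alpha) = t(\alpha)$ and $h(\beta) \le t_{\beta}(\beta) = t(\beta)$. When $a(\alpha) < 0$, then $t_{\beta}(\alpha) > t_{\alpha}(\alpha) = t(\alpha)$, and the choice $h := t_{\beta}$ already suffices.

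For the converse, fix $h$ furnished by the Pierce-Birkhoff condition applied to $(\alpha, \beta)$, again set $a := t_{\alpha} - t_{\beta}$, and distinguish cases on the signs of $a(\alpha)$ and $a(\beta)$. Two sign patterns are immediate: if $a(\alpha) \ge 0$ and $a(\beta) \le 0$, then $a$ itself is a generator of $\bqf{\alpha}{\beta}$; if $a(\alpha) \le 0$ and $a(\beta) \ge 0$, then $-a$ is. In the case $a(\alpha) \ge 0, a(\beta) > 0$, setting $f := h - t_{\beta}$ gives $f(\alpha) \ge a(\alpha) \ge 0$ and $f(\beta) \le 0$, and the lemma yields $a \in \bqf{\alpha}{\beta}$. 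The symmetric remaining case $a(\alpha) < 0, a(\beta) \le 0$ I handle by applying the Pierce-Birkhoff hypothesis instead to $(\beta, \alpha)$, obtaining $h'$ with $h'(\beta) \ge t(\beta)$ and $h'(\alpha) \le t(\alpha)$; since $(-a)(\beta) \ge 0$, the element $f' := h' - t_{\alpha}$ satisfies $f'(\beta) \ge (-a)(\beta)$ and $f'(\alpha) \le 0$, so the lemma (with the roles of $\alpha$ and $\beta$ swapped) places $-a$ in $\bqf{\beta}{\alpha} = \bqf{\alpha}{\beta}$.

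The argument is essentially a bookkeeping case analysis once the preceding lemma is in hand; the only mild subtlety I anticipate is remembering that the Pierce-Birkhoff hypothesis must be invoked in both orders $(\alpha,\beta)$ and $(\beta,\alpha)$ to cover the asymmetric sign cases in the converse direction, which implicitly uses the (immediate) symmetry $\bqf{\alpha}{\beta} = \bqf{\beta}{\alpha}$. I do not expect any substantive obstacle.
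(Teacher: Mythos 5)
Your proposal is correct; the only caveat is that the paper itself states Theorem \ref{PBRCond} without proof, quoting it from \cite{Madd1}, so there is no in-paper argument to compare against line by line. Your route --- reducing to the preceding theorem (existence of $h$ with $h(\alpha)\ge t(\alpha)$ and $h(\beta)\le t(\beta)$) and translating that condition into $t_{\alpha}-t_{\beta}\in\bqf{\alpha}{\beta}$ via the lemma characterizing membership in the separating ideal, together with the sign case analysis and the symmetry $\bqf{\alpha}{\beta}=\bqf{\beta}{\alpha}$ --- is precisely the intended (Madden's) argument, and each of your cases checks out.
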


\begin{corollary}

	Every field is Pierce-Birkhoff.

\end{corollary}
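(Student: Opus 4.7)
The plan is to apply Theorem \ref{PBRCond}: a ring $A$ is Pierce-Birkhoff if and only if $t_{\alpha} - t_{\beta} \in \bqf{\alpha}{\beta}$ for all $t \in \PW{A}$ and all $\alpha,\beta \in \Sper{A}$. When $A = K$ is a field, the only ideals of $K$ are $(0)$ and $K$ itself, so $\bqf{\alpha}{\beta}$ must be one of these two, and the whole question collapses to a simple dichotomy.

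First I would split into cases according to whether $\alpha$ and $\beta$ agree. If $\alpha \neq \beta$, then two distinct orderings of a field must disagree on some nonzero element: there exists $a \in K \setminus \{0\}$ with $a \ge 0$ under $\alpha$ and $a \le 0$ under $\beta$. By definition $a \in \bqf{\alpha}{\beta}$, and because $a$ is a unit this forces $\bqf{\alpha}{\beta} = K$; the required containment $t_{\alpha} - t_{\beta} \in K$ is then automatic.

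In the remaining case $\alpha = \beta$, the claim reduces to showing $t_{\alpha} = t_{\beta}$. Here one uses that $\supp{\alpha} = \alpha \cap -\alpha = (0)$ in a field, so the canonical map $\rho_{\alpha}\colon K \to A(\alpha) = K$ is the identity and the further embedding $K \hookrightarrow R(\alpha)$ is injective. Consequently, the element of $K$ mapping to $t(\alpha)$ in $R(\alpha)$ that is guaranteed by the definition of $\PW{K}$ is \emph{unique}, so any two choices $t_{\alpha}$ and $t_{\beta}$ coincide as elements of $K$. Hence $t_{\alpha} - t_{\beta} = 0 \in \bqf{\alpha}{\beta}$.

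I do not anticipate a genuine obstacle here; the corollary is forced by the trivial ideal theory of a field together with the classical fact that distinct orderings of a field are separated by an element of the field. The only small subtlety is observing that the unique-lift argument in the equal-orderings case relies on $\supp{\alpha} = (0)$, which is automatic for a field.
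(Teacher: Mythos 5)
Your argument is correct and is exactly the reasoning the paper leaves implicit: the corollary is stated as an immediate consequence of Theorem \ref{PBRCond}, since in a field the separating ideal is either all of $K$ (when $\alpha \neq \beta$, via a unit changing sign) or contains $t_{\alpha}-t_{\beta}=0$ (when $\alpha = \beta$, since $\supp{\alpha}=(0)$). A tiny shortcut in the second case: one need not invoke uniqueness of the representative, since $(t_{\alpha}-t_{\beta})(\alpha)=0$ already gives $t_{\alpha}-t_{\beta}\in\supp{\alpha}=(0)\subset\bqf{\alpha}{\beta}$.
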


\begin{remarks}

	Let $\alpha, \beta \in \Sper{A}$.
	
	\begin{enumerate}
	
		\item[(i)]{If $\bqf{\alpha}{\beta}$ is a prime ideal or equal to $A$, then for each $t \in \PW{A}$, we have $t_{\alpha} - t_{\beta} \in \bqf{\alpha}{\beta}$.}
		\item[(ii)]{If $\alpha$ and $\beta$ have no common specialization, then $\bqf{\alpha}{\beta} = A$, so we have to check the condition of the theorem only for $\alpha$ and $\beta$ having a common specialization.}
		\item[(iii)]{If $A$ is a ring with the property that the localization at any real prime ideal is a discrete valuation ring, then $A$ is a Pierce-Birkhoff ring. See, for example, Lemma \ref{CCone} below.}
		\item[(iv)]{Any Dedekind ring is a Pierce-Birkhoff ring.}
		\item[(v)]{The (real) coordinate ring of any non-singular (real) algebraic curve is a Pierce-Birkhoff ring.}
	
	\end{enumerate}

\end{remarks}

\section{Connectedness}

The Connectedness Conjecture was introduced by Lucas, Madden, Schaub and Spivakovsky in \cite{LMSS} who showed that it implies the Pierce-Birkhoff Conjecture. We will review this work. First, in order to state the conjecture, we make the following definition.

\begin{definition}

	Let $A$ be a ring and let $\alpha, \beta \in \Sper{A}$.  We say $\alpha$ and $\beta$ satisfy the \textbf{connectedness condition} if for any $\enumi{g}{s} \in A \setminus \bqf{\alpha}{\beta}$, there exists a connected set $C \subset \Sper{A}$ such that $\alpha, \beta \in C$ and $C \cap \set{\delta \in \Sper{A}}{g_{j}(\delta) = 0} = \emptyset$ for all $j \in \{ 1, \ldots, s \}$.

\end{definition}

\begin{conjecture}{(Connectedness Conjecture)}

	Suppose $R$ is a real closed field and $A := R[\enum{X}]$.  Then every pair $\alpha, \beta \in \Sper{A}$ satisfies the connectedness condition.

\end{conjecture}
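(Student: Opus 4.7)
The plan is to reduce the conjecture to a local statement at a common specialization of $\alpha$ and $\beta$ and then attack it by induction on the dimension of the localization, using the Alvis--Johnston--Madden machinery of quadratic transforms of separating ideals.

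First I would dispose of the trivial case. By Proposition part (f), if $\alpha$ and $\beta$ have no common specialization then $\bqf{\alpha}{\beta} = A$, and the hypothesis $g_{j} \notin \bqf{\alpha}{\beta}$ is vacuous; there is nothing to prove. So I may assume that $\alpha$ and $\beta$ have a common specialization, and I fix a least one, call it $\gamma$. Set $\p := \cent{\gamma}$; this is a real prime of $A = R[\enum{X}]$ and both $\alpha$ and $\beta$ have center contained in $\p$. The problem is to build a connected subset of $\Sper{A}$ through $\alpha$ and $\beta$ avoiding the finitely many closed constructible sets $\set{\delta}{g_{j}(\delta) = 0}$.

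Next I would localize at $\p$, reducing to a statement about the regular local ring $A_{\p}$ of dimension $d := \textrm{ht}(\p) \le n$, and I would induct on $d$. The base cases $d \le 1$ are essentially trivial: for $d = 0$ we are in a residue field, and for $d = 1$ the orderings centered at $\p$ form a chain under specialization, so a short chain through $\gamma$ already gives a connected set avoiding the zero sets of the $g_{j}$'s. For the inductive step I would blow up the maximal ideal of $A_{\p}$, lift $\alpha$ and $\beta$ to the unique quadratic transform $A'$ they single out, obtaining orderings $\alpha', \beta'$ on $A'$, and invoke the transform formula for $\bqf{\alpha'}{\beta'}$ from \cite{Madd1} to show that, after finitely many iterations, either $\sqrt{\bqf{\alpha'}{\beta'}}$ becomes a prime (so remark (i) after Theorem \ref{PBRCond} and part (c) of the Proposition yield the connected set along the common specialization) or all the transforms $g_{j}'$ become units modulo the new maximal ideal, at which point a connected set can be produced on the relevant exceptional divisor and pushed back down to $\Sper{A}$ by the proper morphism coming from the composed blow-ups.

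The main obstacle is the inductive step once $d \ge 3$. In dimension two the exceptional divisor of each quadratic transform is a projective line whose real locus is a circle, and connecting two points on such a circle while avoiding finitely many punctures is elementary; moreover the Zariski length of the quadratic-transform sequence determined by two valuations is known to be finite, which supplies a termination invariant. Neither ingredient survives intact in higher dimensions: the real locus of the exceptional $\mathbb{P}^{d-1}$ need not have connected complement once one removes the real hypersurfaces cut out by the $g_{j}'$, and I do not know of a single numerical invariant attached to the pair $(\alpha,\beta)$ that strictly decreases under every quadratic transform when $d \ge 3$. A successful proof in the general case will, I expect, require either a resolution-of-singularities input adapted to the family $\{g_{j}\}$ or a genuinely global semialgebraic construction that replaces the local inductive scheme sketched above; this is the heart of the difficulty and the reason the conjecture remains open for $n \ge 3$.
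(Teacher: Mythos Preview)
The statement you are attempting is a \emph{conjecture} in the paper, not a theorem: the paper does not prove it. What the paper does prove is the two-dimensional case (more precisely, the connectedness condition for pairs $\alpha,\beta$ with a common center in the coordinate ring of a non-singular affine surface over $R$), and it explicitly leaves the general polynomial-ring case open. Your proposal is therefore not being compared against a proof in the paper, because there is none.

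Your sketch is also not a proof, and you yourself say so in the final paragraph: the inductive step breaks down for $d \ge 3$ because you lack both a terminating invariant for the sequence of quadratic transforms and a connectedness statement on the real exceptional locus after puncturing. That diagnosis is accurate and matches the state of the art; it is precisely why the conjecture is open. So as a proof attempt the proposal has a genuine, self-acknowledged gap at the crucial step.

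For the part that \emph{can} be carried out, namely $d \le 2$, your outline is close to what the paper actually does: localize at the common center, use Lemma~\ref{CCone} for height one, and for height two iterate quadratic transforms along $v_{\alpha}$ until, by Theorem~\ref{AJMQuadTrans}, the transformed separating ideal becomes the maximal ideal; then build a small semialgebraically connected set near the origin in the transformed coordinates and push it down. Two remarks. First, your treatment of the case $\bqf{\alpha}{\beta} = A$ is not quite right: the hypothesis on the $g_{j}$ is indeed vacuous, but the conclusion still demands a connected set containing both $\alpha$ and $\beta$, which is a (very easy, since $\Sper{R[\enum{X}]}$ is connected) statement rather than ``nothing to prove''; the paper makes exactly this observation after the proof of the Lucas--Madden--Schaub--Spivakovsky theorem. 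Second, in dimension two the paper does not merely rely on the exceptional $\mathbb{P}^{1}$ being a circle; it tracks carefully, via Lemma~\ref{LocUni}, how each $g_{j}$ factors as $x_{r}^{e_{j}} y_{r}^{f_{j}} w_{j}$ with $w_{j}$ a unit and with the exponent of any sign-changing parameter forced to be zero, and then uses the implicit function theorem on the transformed variety $W^{(r)}$ to produce a genuine semialgebraic connected neighborhood in $\Sper{\mathcal{P}(W)}$. That bookkeeping is the substance of the two-dimensional argument and is not present in your sketch.
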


\begin{theorem}{(Lucas, Madden, Schaub, Spivakovsky)}

	Let $A$ be a noetherian ring in which the connectedness condition holds for every two points $\alpha,\beta \in \Sper{A}$ with a common center. Then $A$ is a Pierce-Birkhoff ring.

\end{theorem}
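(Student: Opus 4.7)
The plan is to reduce to the criterion provided by Theorem \ref{PBRCond}: it suffices to show that for every $t \in \PW{A}$ and every pair $\alpha,\beta \in \Sper{A}$, we have $t_\alpha - t_\beta \in \bqf{\alpha}{\beta}$. I first dispose of pairs $(\alpha,\beta)$ without a common center: since $A$ is noetherian, part g) of the Proposition gives $\bqf{\alpha}{\beta} = A$, so the inclusion is automatic. Hence I may assume $\alpha$ and $\beta$ have a common center, where the hypothesis of the theorem puts the connectedness condition at my disposal.

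Next, I unpack the piecewise polynomial structure of $t$: as discussed in the paragraph after the definition of $\PW{A}$, there exist closed constructible sets $\enumi{U}{m}$ covering $\Sper{A}$ and elements $\enumi{a}{m} \in A$ with $t = a_k$ on $U_k$. After renumbering, assume $\alpha \in U_i$ and $\beta \in U_j$, so that $t_\alpha - t_\beta = a_i - a_j$. I argue by contradiction, supposing $a_i - a_j \notin \bqf{\alpha}{\beta}$. In particular $\bqf{\alpha}{\beta}$ is proper, and the set
\[ G = \set{a_k - a_l}{1 \le k,l \le m,\ a_k - a_l \notin \bqf{\alpha}{\beta}} \]
is a finite subset of $A \setminus \bqf{\alpha}{\beta}$ containing $a_i - a_j$. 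Applying the connectedness condition produces a connected set $C \subset \Sper{A}$ with $\alpha,\beta \in C$ on which no element of $G$ vanishes.

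Now I introduce the equivalence relation $k \sim l \iff a_k - a_l \in \bqf{\alpha}{\beta}$ on $\{1,\ldots,m\}$, and for each class $[k]$ define $W_{[k]} := \bigcup_{l \in [k]} (C \cap U_l)$. Each $W_{[k]}$ is closed in $C$ as a finite union of closed sets. The key observation is that distinct classes give disjoint pieces: if $k \not\sim l$ then $a_k - a_l \in G$, so $a_k - a_l$ has no zero on $C$, forcing $C \cap U_k \cap U_l = \emptyset$ (on this intersection $a_k = t = a_l$). Since $\Sper{A} = \bigcup_k U_k$, the $W_{[k]}$ cover $C$, so they form a finite partition of $C$ into disjoint closed sets, hence each is clopen. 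Connectedness of $C$ leaves only one nonempty class; but $\alpha \in W_{[i]}$ and $\beta \in W_{[j]}$ force $i \sim j$, contradicting $a_i - a_j \notin \bqf{\alpha}{\beta}$.

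The argument is essentially topological once the correct finite set $G$ has been chosen, and I expect the only subtlety to be the verification that the partition $\{W_{[k]}\}$ really consists of closed (hence clopen) pieces of $C$ in the spectral topology and that the equivalence classes are genuinely separated by the non-vanishing of the elements of $G$ on $C$. Everything else is bookkeeping, so there is no substantial obstacle once the connectedness hypothesis has been invoked with this particular choice of $\enumi{g}{s}$.
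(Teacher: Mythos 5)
Your proof is correct and follows essentially the same route as the paper: reduce via Theorem \ref{PBRCond}, dispose of pairs without a common center using part g) of the proposition, apply the connectedness condition to the finitely many differences $a_k - a_l \notin \bqf{\alpha}{\beta}$, and exploit the cover of the connected set $C$ by the closed pieces $C \cap U_k$. The only cosmetic difference is that you partition $C$ according to the equivalence relation $a_k - a_l \in \bqf{\alpha}{\beta}$, while the paper uses a reachability set of indices (chains meeting zero sets on $C$) together with a telescoping sum; both arguments rest on the same disjointness of the closed pieces forced by the non-vanishing of the chosen elements on $C$.
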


\begin{proof}

	Let $t \in \PW{A}$, and let $(U_{j})_{j=1}^{m}$ be a finite sequence of constructible sets in $\Sper{A}$ such that $\Sper{A} = \bigcup\limits_{j=1}^{m} U_{j}$ and $t = t_{j}$ on $U_{j}$ for some $t_{j} \in A$. Let $\alpha,\beta \in \Sper{A}$. By Theorem \ref{PBRCond}, we have to show that $t_{\alpha} - t_{\beta} \in \bqf{\alpha}{\beta}$, where $t_{\alpha}$ (resp. $t_{\beta}$) is any element $a \in A$ such that $t(\alpha) = a(\alpha)$ (resp. $t(\beta) = a(\beta)$). We may assume that $\alpha$ and $\beta$ have a common center, since otherwise $A = \bqf{\alpha}{\beta}$. Now let $T = \set{\{ j,k \} \subset \{ 1, \ldots, m \}}{t_{j} - t_{k} \notin \bqf{\alpha}{\beta}}$, and we apply the connectedness condition to the finitely many elements $t_{j} - t_{k}$ with $\{ j,k \} \in T$ to get a connected set $C \subset \Sper{A}$ such that $\alpha,\beta \in C$ and $C \cap \set{\delta \in \Sper{A}}{(t_{j} - t_{k})(\delta) = 0} = \emptyset$ for all $\{ j,k \} \in T$.
	
	Let $K$ be the set of all indices $k \in \{ 1, \ldots, m \}$ such that there exists a sequence $\enumi{j}{s} \in \{ 1, \ldots, m \}$ with $\alpha \in U_{j_{1}}$, $j_{s} = k$, and, for all $q \in \{ 1, \ldots, s-1 \}$, we have $C \cap \set{\delta \in \Sper{A}}{(t_{j_{q}} - t_{j_{q+1}})(\delta) = 0} \neq \emptyset$.\\
	Let $F = \bigcup\limits_{k \in K} (U_{k} \cap C)$. Then $\alpha \in F$ by definition.
	
	We claim that $F = C$. Let $K^{c} := \{ 1,\ldots, m \} \setminus K$ and $G := \bigcup\limits_{j \in K^{(c)}} (U_{j} \cap C)$. Clearly, $C = F \cup G$ and both sets $F$ and $G$ are closed in $C$. Suppose $F \cap G \neq \emptyset$, and let $\delta \in F \cap G$. Then there exists some $k \in K$ and some $j \in K^{c}$ such that $\delta \in U_{k} \cap U_{j}$, and thus $t_{k}(\delta) = t_{j}(\delta)$. Therefore, we have $\delta \in C \cap \{ t_{k} - t_{j} = 0 \}$, and hence $j \in K$, a contradiction. We have shown that $F$ and $G$ are disjoint, and since $C$ is connected and $F \neq \emptyset$, this yields $G = \emptyset$. In particular, we have $\beta \in F$.
	
	Now let $k \in K$ such that $\beta \in U_{k}$, and hence we can set $t_{k} =: t_{\beta}$.
	Then there exists a sequence $\enumi{j}{s} \in \{ 1, \ldots, m \}$ such that $\alpha \in U_{j_{1}}$, $j_{s} = k$ and, for all $q \in \{ 1, \ldots, s-1 \}$, $\{ j_{q},j_{q+1} \} \notin T$, i.e., $t_{j_{q}} - t_{j_{q+1}} \in \bqf{\alpha}{\beta}$. Hence, we have obtained $t_{\alpha} - t_{\beta} \in \bqf{\alpha}{\beta}$, if we set $t_{\alpha} := t_{j_{1}}$.

\end{proof}

If $\bqf{\alpha}{\beta} = \{ 0 \}$, then we have $\alpha = \beta$ and $\supp{\alpha} = \supp{\beta} = \{ 0 \}$. Hence $g(\alpha) \neq 0$ for all $g \in A \setminus \bqf{\alpha}{\beta}$, and therefore the set $C = \{ \alpha \}$ fulfills all requirements of the connectedness condition.

In the next sections, we will prove that the connectedness condition holds for every pair of points $\alpha,\beta$ which are in the real spectrum of a finitely generated two-dimensional regular $R$-algebra $A$, where $R$ is a real closed field, and have the same center in $A$, i.e., $\bqf{\alpha}{\beta} \subsetneq A$. Note that, if $\bqf{\alpha}{\beta} = A$, the connectedness condition holds for $\alpha$ and $\beta$ if and only if there exists a connected set $C \subset \Sper{A}$ that contains $\alpha$ and $\beta$. This is true, for example, if $A$ is the polynomial ring $R[\enum{X}]$.

First, we give some examples for connected sets in the real spectrum of a polynomial ring over a real closed field.  Let $R$ be a real closed field, and let $A := R[\enum{X}]$ be the polynomial ring in $n$ indeterminates over $R$.

\begin{definition}

	A semialgebraic subset $S$ of $R^{n}$ is called \textbf{semialgebraically connected} if there are no two non-empty closed semialgebraic sets $S_{1}$ and $S_{2}$ in $S$ such that $S_{1} \cap S_{2} = \emptyset$ and $S = S_{1} \cup S_{2}$.

\end{definition}

\begin{proposition}

	Let $U,V$ be two semialgebraic sets, and let $\varphi\colon U \rightarrow V$ be a continuous semialgebraic map, i.e., a continuous map whose graph is semialgebraic. Then the image under $\varphi$ of any semialgebraically connected subset of $U$ is again semialgebraically connected.

\label{ContSemiConn}
\end{proposition}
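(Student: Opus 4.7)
The plan is to carry out the standard topological proof of ``continuous image of connected is connected'' and check that all constructions stay within the semialgebraic category. Concretely, I would argue by contradiction. Let $S \subset U$ be semialgebraically connected and assume, for the sake of contradiction, that $\varphi(S)$ is not semialgebraically connected. Then there exist nonempty closed semialgebraic subsets $T_{1}, T_{2}$ of $\varphi(S)$ with $T_{1} \cap T_{2} = \emptyset$ and $T_{1} \cup T_{2} = \varphi(S)$.

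Next, I would set $S_{i} := \varphi^{-1}(T_{i}) \cap S$ for $i = 1, 2$ and verify that this pair disconnects $S$. The sets $S_{i}$ are closed in $S$ by continuity of $\varphi$; they are nonempty because $T_{i} \subset \varphi(S)$ and $T_{i} \neq \emptyset$; they are disjoint because $T_{1} \cap T_{2} = \emptyset$; and they cover $S$ since
\[ S_{1} \cup S_{2} = \varphi^{-1}(T_{1} \cup T_{2}) \cap S = \varphi^{-1}(\varphi(S)) \cap S = S. \]
Obtaining a disconnection of $S$ then contradicts the hypothesis and completes the proof.

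The only point requiring more than a topological reflex is to check that $S_{1}$ and $S_{2}$ are themselves semialgebraic. For this I would invoke the Tarski--Seidenberg principle (or argue directly from the fact that the graph of $\varphi$ is semialgebraic): if $T_{i} \subset V$ is semialgebraic, then $\varphi^{-1}(T_{i})$ is the projection of $\mathrm{graph}(\varphi) \cap (U \times T_{i})$ onto the first factor, which is a projection of a semialgebraic set and hence semialgebraic. Intersecting with the semialgebraic set $S$ preserves this property. This verification is the only ``real'' step; I do not expect any genuine obstacle, the argument being essentially a translation of the classical topological proof into the semialgebraic framework.
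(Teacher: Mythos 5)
Your argument is correct: the paper states this proposition without proof (it is a standard fact from real algebraic geometry, cf.\ Bochnak--Coste--Roy), and your proof is precisely the standard one -- pulling back a hypothetical semialgebraic disconnection of $\varphi(S)$ along $\varphi$, with the only genuinely semialgebraic point being that $\varphi^{-1}(T_i)$ is semialgebraic via projection of $\mathrm{graph}(\varphi)\cap(U\times T_i)$, which you handle correctly by Tarski--Seidenberg. The relative-closedness step is also fine, since $S_i=(\varphi|_S)^{-1}(T_i)$ with $T_i$ closed in $\varphi(S)$, so nothing further is needed.
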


\begin{proposition}

	Every interval of $R$ is semialgebraically connected.

\label{IvSemiConn}
\end{proposition}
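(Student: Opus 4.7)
The plan is the standard ``supremum'' argument from the archimedean case, transported to a real closed field via the structure of semialgebraic subsets of $R$. Suppose for contradiction that $I \subset R$ is an interval and $I = S_{1} \cup S_{2}$ with $S_{1}, S_{2}$ non-empty, closed semialgebraic in $I$, and disjoint. Pick $a \in S_{1}$ and $b \in S_{2}$; after possibly swapping $S_{1}$ and $S_{2}$, we may assume $a < b$. Since $I$ is an interval, $[a,b] \subset I$.

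The key technical ingredient is that in a real closed field $R$, every non-empty semialgebraic subset $S \subset R$ which is bounded above has a supremum in $R$, and if in addition $S$ is closed and the sup is a limit of $S$ (i.e.\ $S$ is not just $\{-\infty\}$ approached from below but actually reaches its sup), then the sup lies in $S$. This is because, by the Tarski--Seidenberg/quantifier elimination theorem for $R$, any semialgebraic subset of $R$ is a finite union of points and open intervals; a closed bounded such set is a finite union of points and closed intervals and therefore contains its supremum.

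With this tool, define
\[ c := \sup \bigl( S_{1} \cap [a,b] \bigr). \]
The set $S_{1} \cap [a,b]$ is a non-empty bounded semialgebraic subset of $R$, so $c \in R$ exists and $a \le c \le b$, hence $c \in I$. Since $S_{1}$ is closed in $I$ and $c$ is in the closure of $S_{1} \cap [a,b]$ (otherwise some semialgebraic neighborhood of $c$ would be disjoint from $S_{1} \cap [a,b]$, forcing the sup to be strictly less than $c$), we conclude $c \in S_{1}$, and in particular $c < b$. Now apply the same reasoning to the non-empty bounded set $S_{2} \cap [c,b]$ (it contains $b$), letting
\[ d := \inf \bigl( S_{2} \cap [c,b] \bigr); \]
then $d \in S_{2}$ and $c \le d$. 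If $c < d$, pick any $e \in R$ with $c < e < d$; then $e \in I$, so $e \in S_{1} \cup S_{2}$, but $e \in S_{1}$ contradicts the definition of $c$ and $e \in S_{2}$ contradicts the definition of $d$. Hence $c = d$, giving $c \in S_{1} \cap S_{2}$, a contradiction.

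The only non-routine step is the supremum property for closed semialgebraic subsets of $R$, which is the place where the real closed hypothesis on $R$ really enters; everything else is formally identical to the classical proof for $\R$. I expect the paper either to cite this fact from Bochnak--Coste--Roy or to invoke the one-variable structure theorem for semialgebraic sets directly.
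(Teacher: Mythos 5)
Your argument is correct. Note that the paper itself gives no proof of this proposition: it is quoted as a standard fact of real algebraic geometry over a real closed field (it is in Bochnak--Coste--Roy, which the paper cites for the companion result Proposition \ref{SemiConnSpec}), so there is no in-paper proof to compare against. Your route is the standard self-contained one: the decisive point is exactly where you put it, namely that by quantifier elimination a semialgebraic subset of $R$ is a finite union of points and open intervals, so a non-empty bounded semialgebraic set has a supremum in $R$, and a closed bounded one attains it; with that, the sup/inf chase ($c = \sup(S_{1} \cap [a,b])$, $d = \inf(S_{2} \cap [c,b])$, forcing $c = d \in S_{1} \cap S_{2}$) goes through verbatim as over $\R$, and the relative-closedness bookkeeping is fine since $S_{i} \cap [a,b]$ is closed in $R$. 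One sentence of yours is muddled --- the parenthetical ``the sup is a limit of $S$ \dots actually reaches its sup'' reads as circular --- but the justification you give right after (closed bounded semialgebraic sets are finite unions of points and closed intervals, hence contain their supremum) is the correct statement and is what the proof actually uses, so nothing is missing.
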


From Proposition \ref{IvSemiConn}, one can immediately derive the following result.

\begin{proposition}

	Every convex semialgebraic set $S \subset R^{n}$ is semialgebraically connected.

\label{ConvSemiConn}
\end{proposition}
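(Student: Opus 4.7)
The plan is to reduce semialgebraic connectedness of $S$ to that of a closed interval by joining any two points of $S$ with a line segment, and then invoke Propositions \ref{ContSemiConn} and \ref{IvSemiConn}.

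First, I fix two arbitrary points $a, b \in S$ and consider the map $\varphi\colon [0,1] \to R^n$ defined by $\varphi(\lambda) = (1-\lambda) a + \lambda b$. This map is continuous, and its graph is cut out in $R \times R^n$ by polynomial equations (together with the inequalities $0 \le \lambda \le 1$), so it is semialgebraic. By convexity of $S$, the image of $\varphi$ is contained in $S$. Applying Proposition \ref{IvSemiConn} to $[0,1]$ and then Proposition \ref{ContSemiConn} to $\varphi$ yields that $\varphi([0,1])$ is a semialgebraically connected subset of $S$ containing both $a$ and $b$.

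Next, I argue semialgebraic connectedness of $S$ directly from the definition. Suppose for contradiction that there exist non-empty closed semialgebraic subsets $S_{1}, S_{2}$ of $S$ with $S_{1} \cap S_{2} = \emptyset$ and $S = S_{1} \cup S_{2}$. Picking $a \in S_{1}$ and $b \in S_{2}$, I form the segment image $T := \varphi([0,1]) \subset S$ and look at the two sets $T_{i} := T \cap S_{i}$ for $i = 1,2$. Each $T_{i}$ is closed in $T$, they are disjoint, their union is $T$, and $T_{1}$ contains $a$ while $T_{2}$ contains $b$, so both are non-empty. This contradicts the semialgebraic connectedness of $T$ established in the previous step.

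The argument is essentially mechanical once the two earlier propositions are available; there is no substantive obstacle. The only point requiring a moment of care is verifying that $\varphi$ qualifies as a continuous semialgebraic map in the sense of Proposition \ref{ContSemiConn}, but this is immediate because its components are polynomials in $\lambda$ with coefficients in $R$, so its graph is a semialgebraic subset of $[0,1] \times R^{n}$.
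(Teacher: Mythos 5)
Your proof is correct and follows exactly the derivation the paper intends when it says the result follows immediately from Proposition \ref{IvSemiConn}: parametrize the segment joining two points, transport connectedness of the interval via Proposition \ref{ContSemiConn}, and conclude by contradiction from the definition. No gaps.
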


\begin{examples}

	Let $\varepsilon \in R$ such that $\varepsilon > 0$, and let $I \subset \{ 1, \ldots, n \}$. Then, by Proposition \ref{ConvSemiConn}, the semialgebraic set
	\[ C_{I}^{\varepsilon} := \set{x \in R^{n}}{\varepsilon - \sum_{i = 1}^{n} x_{i}^{2} > 0, \ x_{i} > 0 \ (i \in I)} \]
	is semialgebraically connected.

\label{ExSemiConn}
\end{examples}

Let $V \subset R^{n}$ be an algebraic set, and let $\mathcal{P}(V)$ be its coordinate ring. For any semialgebraic set $S$ in $V$, we denote by $\tilde{S}$ the corresponding constructible set in $\Sper{\mathcal{P}(V)}$. Let us recall Proposition 7.5.1 of \cite{BCR}, which is an important tool to find connected sets in the real spectrum of $\mathcal{P}(V)$.

\begin{proposition}

	Let $S$ be a semialgebraic set in $V$. $S$ is semialgebraically connected if and only if $\tilde{S}$ is connected in the spectral topology of $\mathcal{P}(V)$.

\label{SemiConnSpec}
\end{proposition}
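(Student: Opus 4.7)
The proof will rest on the standard tilde correspondence between semialgebraic subsets of $V$ and constructible subsets of $\Sper{\mathcal{P}(V)}$. Before splitting into two directions, I would record the three properties of this correspondence that do all the real work: (i) the map $S \mapsto \tilde{S}$ is a Boolean-algebra isomorphism, so it preserves finite unions, finite intersections and complements within $V$; (ii) it preserves non-emptiness, i.e., $S = \emptyset$ if and only if $\tilde{S} = \emptyset$, which is a form of the real Nullstellensatz/Positivstellensatz; and (iii) it transports closures, so that $S$ is closed in $V$ if and only if $\tilde{S}$ is closed in the spectral topology of $\Sper{\mathcal{P}(V)}$, and more generally the closure of $S$ inside any semialgebraic $S'$ corresponds to the closure of $\tilde{S}$ inside $\tilde{S'}$. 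All of these are standard facts about the real spectrum of a finitely generated $R$-algebra, and I would just quote them from \cite{BCR}.

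For the easy direction ($\Leftarrow$), I would argue by contraposition. Assume that $S$ is not semialgebraically connected, so $S = S_{1} \cup S_{2}$ with $S_{i}$ closed in $S$, non-empty, and $S_{1} \cap S_{2} = \emptyset$. Applying the three properties above to this decomposition yields $\tilde{S} = \tilde{S}_{1} \cup \tilde{S}_{2}$, with $\tilde{S}_{i}$ closed in $\tilde{S}$, non-empty, and $\tilde{S}_{1} \cap \tilde{S}_{2} = \emptyset$; hence $\tilde{S}$ is disconnected in the spectral topology. No further work is required.

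For the direction ($\Rightarrow$), again by contraposition, suppose $\tilde{S}$ is disconnected in the spectral topology, so $\tilde{S} = C_{1} \sqcup C_{2}$ with both $C_{i}$ non-empty and clopen in $\tilde{S}$. The key extra step, and the only non-formal part of the argument, is to pass from ``clopen in $\tilde{S}$'' to ``constructible in $\Sper{\mathcal{P}(V)}$'', so that the $C_{i}$ are of the form $\tilde{T}_{i}$ for some semialgebraic $T_{i} \subset V$. This uses that $\tilde{S}$ is itself a spectral space (being a constructible subset of $\Sper{\mathcal{P}(V)}$): each $C_{i}$ is open, hence a union of quasi-compact open subsets, and simultaneously closed in the quasi-compact space $\tilde{S}$, hence itself quasi-compact; so $C_{i}$ is a finite union of basic quasi-compact opens and is in particular constructible. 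This is the main technical obstacle, and the argument is soft but relies essentially on the spectral-space structure.

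Once the $C_{i}$ are known to be constructible, the tilde correspondence produces semialgebraic $T_{i} \subset S$ with $C_{i} = \tilde{T}_{i}$. Non-emptiness of $T_{i}$ follows from (ii), disjointness $T_{1} \cap T_{2} = \emptyset$ and the equality $T_{1} \cup T_{2} = S$ follow from (i), and closedness of each $T_{i}$ inside $S$ follows from (iii) applied to $\tilde{S}$ (since $C_{i}$ is closed in $\tilde{S}$). Thus $S$ is not semialgebraically connected, finishing the proof. The only delicate point in the whole argument is the clopen-implies-constructible step in a spectral space; the rest is a purely formal transport along the tilde map.
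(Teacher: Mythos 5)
Your proposal is correct, but there is nothing in the paper to compare it against: the paper does not prove this proposition at all, it simply quotes it as Proposition 7.5.1 of \cite{BCR} (``Let us recall Proposition 7.5.1 of \cite{BCR}''). What you have written is essentially the standard textbook argument, and it is sound. The direction from semialgebraic disconnectedness to spectral disconnectedness is, as you say, a purely formal transport along the tilde correspondence; the substantive step in the converse is exactly the one you isolate, namely that a clopen subset of $\tilde{S}$ in the spectral topology is constructible, and your quasi-compactness argument for it is right: $\tilde{S}$ is quasi-compact (constructible sets are quasi-compact even in the finer constructible topology), so a closed subset of $\tilde{S}$ is quasi-compact, and an open subset of $\tilde{S}$ is a union of traces of basic quasi-compact opens of $\Sper{\mathcal{P}(V)}$, whence a clopen subset is a finite such union and therefore constructible, i.e.\ of the form $\tilde{T}$ for a semialgebraic $T$. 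Two small remarks: your property (iii), the compatibility of tilde with closures (equivalently $\overline{\tilde{S}} = \widetilde{\overline{S}}$, BCR Proposition 7.2.3), is itself a nontrivial theorem resting on the finiteness theorem for closed semialgebraic sets, so it deserves an explicit citation rather than being lumped with the formal Boolean properties; and in the clopen-implies-constructible step it is cleaner to work with traces of quasi-compact opens of the ambient space $\Sper{\mathcal{P}(V)}$, as sketched above, which lets you avoid invoking the general fact that a constructible subset with the induced topology is again a spectral space.
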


\section{The One-Dimensional Case}

Let $A$ be an integral domain. In this section, we treat the case where $\sqrt{\bqf{\alpha}{\beta}}$ is a prime ideal of height one, and the localization of $A$ at $\sqrt{\bqf{\alpha}{\beta}}$ is a regular ring.

\begin{lemma}

	 Let $A$ be a integral domain. Let $\alpha, \beta \in \Sper{A}$ such that $\sqrt{\bqf{\alpha}{\beta}}$ is a prime ideal of height one and $A_{\sqrt{\bqf{\alpha}{\beta}}}$ is a regular local ring. Let $\enumi{g}{s} \in A \setminus \bqf{\alpha}{\beta}$. Then $\sqrt{\bqf{\alpha}{\beta}} = \bqf{\alpha}{\beta}$ and there exists a connected set $C \subset \Sper{A}$ such that $\alpha, \beta \in C$ and $C \cap \set{\delta \in \Sper{A}}{g_{j}(\delta) = 0} = \emptyset$ for all $j \in \{ 1,\ldots,s \}$.

\label{CCone}
\end{lemma}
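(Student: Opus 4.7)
Let $\p := \sqrt{\bqf{\alpha}{\beta}}$ and let $v$ be the valuation of the DVR $A_{\p}$. By Proposition (c), $\p$ is prime, $\alpha$ and $\beta$ agree on $A/\p$, and their least common specialization $\gamma \in \Sper{A}$ has support $\p$. Because primes of $A$ contained in $\p$ correspond bijectively to primes of $A_{\p}$, each of $\supp{\alpha}, \supp{\beta}$ is either $\{0\}$ or $\p$. If $\supp{\alpha} = \p$ then $\alpha = \gamma$, hence $\p = \supp{\gamma} \subset \bqf{\alpha}{\beta} \subset \sqrt{\bqf{\alpha}{\beta}} = \p$, giving $\bqf{\alpha}{\beta} = \p$; symmetrically if $\supp{\beta} = \p$. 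I may therefore assume $\supp{\alpha} = \supp{\beta} = \{0\}$, so both are orderings on $K := \Quot{A}$.

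Both orderings specialize to $\gamma$, which translates into $A_{\p}$ being $\alpha$- and $\beta$-convex in $K$ and into $\alpha, \beta$ restricting to the same ordering on the residue field $k := A_{\p}/\p A_{\p}$. By Baer--Krull for the rank-one discrete valuation $v$, such orderings are then parametrised by the sign they assign to a uniformizer; since $\sqrt{\bqf{\alpha}{\beta}} = \p$ is proper and nonzero we must have $\alpha \neq \beta$, so they assign opposite signs to any uniformizer. Choose $a \in \p$ whose image in $A_{\p}$ is a uniformizer (possible because $\p A_{\p} \neq \p^{2}A_{\p}$ by Nakayama) and, after possibly replacing $a$ by $-a$, arrange $a(\alpha) \ge 0 \ge a(\beta)$, so that $a \in \bqf{\alpha}{\beta}$.

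I would next prove $\p \subset \bqf{\alpha}{\beta}$ by a case split on $v(b)$ for $b \in \p$. When $v(b) \ge 2$, the convexity of $A_{\p}$ in $K$ (for each of $\alpha, \beta$) forces $|b| < |a|$ in both orderings, so applying the characterisation Lemma to whichever of $\pm b$ is $\ge 0$ at $\alpha$, with witness $h = \pm a$, places $b$ in $\bqf{\alpha}{\beta}$. When $v(b) = 1$, the same Baer--Krull reasoning used for $a$ shows that $b$ already has opposite signs at $\alpha$ and $\beta$, because it differs from $a$ by a unit in $A_{\p}$ whose residue in $k$ has a common sign in $\alpha$ and $\beta$; hence $\pm b \in \bqf{\alpha}{\beta}$ directly. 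Together with $\bqf{\alpha}{\beta} \subset \p$ this yields $\bqf{\alpha}{\beta} = \p$, which is prime and hence coincides with its radical.

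For the connected set I take $C := \{\alpha, \beta, \gamma\} \subset \Sper{A}$. Each $g_{j} \notin \bqf{\alpha}{\beta} = \p$ is a unit in $A_{\p}$, so $g_{j}(\gamma) \neq 0$, and $g_{j}(\alpha), g_{j}(\beta) \neq 0$ because $\supp{\alpha}, \supp{\beta} \subset \p$. The subspace $C$ is connected in the spectral topology: $\gamma$ lies in the closure of both $\{\alpha\}$ and $\{\beta\}$, so any proper non-empty open subset of $C$ must miss $\gamma$ while its closure contains it, ruling out a non-trivial clopen partition. The main technical hurdle is the descent from the local equality $\bqf{\alpha}{\beta} A_{\p} = \p A_{\p}$ (which is automatic once $a \in \bqf{\alpha}{\beta}$ has $v(a)=1$) to the global equality $\bqf{\alpha}{\beta} = \p$ in $A$; this is precisely where the fine-grained Baer--Krull argument in the subcase $v(b) = 1$ is indispensable, since a pure convexity argument settles only the subcase $v(b) \ge 2$.
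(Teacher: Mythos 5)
Your proof is correct, and it reaches the paper's conclusion $\bqf{\alpha}{\beta} = \sqrt{\bqf{\alpha}{\beta}} = \p$ by a genuinely different, more ``positive'' route. The paper argues by contradiction: assuming $\bqf{\alpha}{\beta} \subsetneq \p$, it passes to the discrete valuation ring $B = A_{\p}$, where the separating ideal of the extended orderings is $\bqf{\alpha}{\beta}B$, chooses $\pi \in \p \setminus \bqf{\alpha}{\beta}$ with $\pi B = \p B$, and notes that every $b \in B$ is $\pi^{r}u$ with $u$ a unit, neither factor changing sign, so that $\bqf{\alpha'}{\beta'} = \{0\}$, a contradiction. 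Your argument is essentially the same sign computation packaged through Baer--Krull and run forwards: two distinct orderings of $K = \Quot{A}$ compatible with the rank-one discrete valuation and inducing the same residue ordering must give opposite signs to every uniformizer, and your case split ($v(b)=1$ via the residue sign of the unit $b/a$, $v(b)\ge 2$ via convexity of the maximal ideal plus the paper's membership lemma with witness $\pm a$) then proves $\p \subset \bqf{\alpha}{\beta}$ outright, which is a little more informative than the contradiction. The one step you assert rather than prove is that $A_{\p}$ is convex in $(K,\alpha)$ and $(K,\beta)$: this does follow from the specialization to $\gamma$, since the specialization provides a convex valuation ring of $(K,\le_{\alpha})$ containing $A$ whose maximal ideal contracts to $\p$; such a ring contains $A_{\p}$ (elements of $A\setminus\p$ become units), and because $A_{\p}$ is a DVR its only overrings in $K$ are $A_{\p}$ and $K$, forcing the convex ring to equal $A_{\p}$ --- worth a sentence, since the paper's contradiction argument sidesteps this compatibility fact. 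For the connectedness statement the two proofs also differ: the paper takes the connected component of $\Sper{A} \setminus \set{\delta}{(g_{1}\cdots g_{s})(\delta)=0}$ containing $\gamma$, while you take the explicit three-point set $C = \{\alpha,\beta,\gamma\}$; both rest on the same topological fact that every open set containing $\gamma$ contains $\alpha$ and $\beta$, and your choice is, if anything, more economical. Note finally that your construction of $C$ uses only $\bqf{\alpha}{\beta} = \p = \supp{\gamma}$ and so covers the degenerate cases $\supp{\alpha} = \p$ or $\supp{\beta} = \p$ handled separately in your reduction.
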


\begin{proof}

	Assume $\bqf{\alpha}{\beta} \subsetneq \sqrt{\bqf{\alpha}{\beta}}$. We consider the one-dimensional regular local ring (i.e., discrete valuation ring) $B := A_{\sqrt{\bqf{\alpha}{\beta}}}$. Both orderings $\alpha,\beta \in \Sper{A}$ extend uniquely to $\alpha',\beta' \in \Sper{B}$. In $B$, the separating ideal $\bqf{\alpha'}{\beta'}$ is equal to $\bqf{\alpha}{\beta}B$. By assumption, there exists some $\pi \in \sqrt{\bqf{\alpha}{\beta}} \setminus \bqf{\alpha}{\beta}$ such that $\pi B = \sqrt{\bqf{\alpha}{\beta}}B = \sqrt{\bqf{\alpha'}{\beta'}}$.\\
	Every element in $b \in B$ can be written as $\pi^{r}u$ for some $r \in \N$ and $u \in B^{\times} = B \setminus \sqrt{\bqf{\alpha'}{\beta'}}$. Both $\pi$ and $u$ do not change sign between $\alpha'$ and $\beta'$, hence $b = \pi^{r}u$ does not change sign either. Thus $\sqrt{\bqf{\alpha'}{\beta'}} = \bqf{\alpha'}{\beta'} = \{ 0 \}$, a contradiction.
	
	Let $\gamma \in \Sper{A}$ be the least common specialization of $\alpha$ and $\beta$. Then $\supp{\gamma} = \sqrt{\bqf{\alpha}{\beta}} = \bqf{\alpha}{\beta}$. Hence, for all $g \in A \setminus \bqf{\alpha}{\beta}$, we have $\gamma \in \Sper{A} \setminus \set{\delta \in \Sper{A}}{g(\delta) = 0}$. For any such $g$, let $C_{g,\gamma}$ be the connected component of the open set $\Sper{A} \setminus \set{\delta \in \Sper{A}}{g(\delta) = 0}$ that contains $\gamma$. Since $\alpha$ and $\beta$ specialize to $\gamma$, they are also contained in $C_{g,\gamma}$.\\
	Now let $g := g_{1} \cdots g_{s}$. Then $g \notin \bqf{\alpha}{\beta} = \sqrt{\bqf{\alpha}{\beta}}$, so we can take $C := C_{g,\gamma}$.

\end{proof}

\section{Valuations, Orderings and Quadratic Transformations}

Let $R$ be a real closed field. In order to prove the connectedness condition in the case that $\sqrt{\bqf{\alpha}{\beta}}$ has height two in a finitely generated two-dimensional regular $R$-algebra, we have to consider so-called quadratic transformations along a valuation of this ring.

Let $A$ be a noetherian ring.

\begin{definition}

	A \textbf{valuation} $v$ of $A$ is a map $A \rightarrow \Gamma \cup \{ \infty \}$, where $\Gamma$ is a total ordered abelian group, such that for all $a,b \in A$
	
	\begin{enumerate}
	
		\item[(i)]{$v(0) = \infty$, $v(1) = 0$,}
		\item[(ii)]{$v(ab) = v(a) + v(b)$ and}
		\item[(iii)]{$v(a+b) \ge \min \{ v(a), v(b) \}$.}
	
	\end{enumerate}
	
	The prime ideal $\supp{v} := \set{a \in A}{v(a) = \infty}$ is called the \textbf{support} of $v$ in $A$ and, if $v$ is non-negative on $A$, then $\cent{v} := \set{a \in A}{v(a) > 0}$ is a prime ideal called the \textbf{center} of $v$ in $A$.
	
	Let $v$ be a valuation of $A$ that is non-negative on $A$.
	
	An ideal $I$ of $A$ is called a \textbf{$v$-ideal} if $I = \set{a \in A}{\exists \, b \in I \ (v(b) \le v(a))}$. Since $A$ is noetherian, for all $v$-ideals $I$, there exists an element $b \in I$ such that $I = \set{a \in A}{v(b) \le v(a)}$. If $I$ is a $v$-ideal, then $I^{v} := \set{a \in A}{v(b) < v(a)}$ is the largest $v$-ideal properly contained in $I$. The set of $v$-ideals of $A$ is totally ordered by inclusion.
	
	Note that the support of $v$ is the smallest $v$-ideal in $A$ and that the center of $v$ is the largest proper $v$-ideal in $A$.
	
	If $A$ is a local ring with maximal ideal $\m$, we say that a valuation $v$ of $A$ \textbf{dominates} $A$ if $v$ is non-negative on $A$ and the center of $v$ in $A$ is equal to $\m$.

\end{definition}

\begin{remark}

	Let $A$ be a local ring with maximal ideal $\m$ and residue field $k$, and let $v$ be a valuation of $A$ that dominates $A$. Then $v$ induces a valuation on the quotient field of $A / \supp{v}$. We denote the corresponding valuation ring by $\OR_{v}$, its maximal ideal by $\m_{v}$ and its residue field by $K_{v}$. Since $v$ dominates $A$, we have $k \subset K_{v}$.
	
	Let $I$ be a $v$-ideal of $A$ where $I$ is different from the support of $v$. Consider the following composition of $k$-vector space homomorphisms
	\[ I \twoheadrightarrow I / \supp{v} \rightarrow \OR_{v} / \m_{v} = K_{v}, \ a \mapsto \overline{a} = a + \supp{v} \mapsto \frac{\overline{a}}{\overline{b}} + \m_{v},  \]
	where $b$ is an element of $I$ having minimal value in $I$. The kernel of this composition is clearly $I^{v}$, hence $I / I^{v}$ is a sub-$k$-vector space of $K_{v}$.

\label{viq1}
\end{remark}

We will now assign to each ordering $\alpha \in \Sper{A}$ of $A$ a valuation $v_{\alpha}$.

\begin{definition}

	Let $\alpha \in \Sper{A}$. Then $\alpha$ induces a total ordering on the field $k(\alpha) = \Quot{A/\supp{\alpha}}$. Now let $\OR_{\alpha}$ be the convex hull of $A(\alpha)$ in $k(\alpha)$. This is a valuation ring of $k(\alpha)$. Let $v'_{\alpha}$ be a corresponding valuation, then $v_{\alpha} := v'_{\alpha} \circ \rho_{\alpha}$ is a valuation of $A$. For any $v_{\alpha}$-ideal $I$ of $A$, we write $I^{\alpha}$ instead of $I^{v_{\alpha}}$.

\end{definition}

\begin{remark}

	Let $\alpha, \beta \in \Sper{A}$. An ideal $I$ of $A$ is a $v_{\alpha}$-ideal if and only if it is convex with respect to $\alpha$. Hence $\bqf{\alpha}{\beta}$ is a $v_{\alpha}$-ideal.

\end{remark}

From now on, let $A$ be an regular local domain with maximal ideal $\m$ and residue field $k$. In particular, $A$ is integrally closed. Let $\ord{A}$ be the order valuation of $A$ (i.e., $\ord{A}(a) = \max \set{n \in \N}{a \in \m^{n}}$). In \cite{ZS2} (Appendix 5) Oscar Zariski and Pierre Samuel showed a unique factorization theorem for $v$-ideals in a two-dimensional regular local ring where $v$ is a dominating valuation.

\begin{definition}

	An ideal $I$ of $A$ is called \textbf{simple} if it is proper and it cannot be written as a product of proper ideals.

\end{definition}

\begin{remark}

	The $\ord{A}$-ideals of $A$ are exactly the powers of the maximal ideal $\m$, hence $\m$ is the only simple $\ord{A}$-ideal of $A$.

\end{remark}

\begin{theorem}

	If $A$ has dimension two and $v$ is a valuation of $A$ that dominates $A$, then a $v$-ideal of $A$ is simple if and only if it cannot be written as a product of proper $v$-ideals of $A$. Moreover, every $v$-ideal of $A$ different from $(0)$ and $A$ has a unique factorization into simple $v$-ideals.

\label{ZarUniFac}
\end{theorem}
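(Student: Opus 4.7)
The plan is to follow Zariski's classical theory of complete ideals as presented in \cite{ZS2}, Appendix 5, specialized to the $v$-complete context. The central construction is the \emph{first quadratic transform of $A$ along $v$}: writing $\m = (x,y)$ with $v(x) \le v(y)$, one forms $A' := A[y/x]_{\mathfrak{q}}$, where $\mathfrak{q}$ is the center of $v$ in $A[y/x]$. A direct computation shows that $A'$ is again a regular local ring (of dimension at most two), that $A \subset A' \subset \OR_{v}$, and that $v$ dominates $A'$ whenever $\dim A' = 2$. To every $v$-ideal $I \ne (0)$ of $A$ with $r := \ord{A}(I)$ one associates the \emph{transform} $I' := x^{-r}(IA')$, which is a $v$-ideal of $A'$; the key multiplicative property one verifies is $(IJ)' = I' J'$ for $v$-ideals $I, J$ of $A$.

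The technical core of the proof is the \emph{product theorem}: in a two-dimensional regular local ring, the product of two $v$-ideals is again a $v$-ideal. Granted this, the first assertion is immediate. Indeed, if $I$ is a $v$-ideal and $I = JK$ is a factorization into proper ideals of $A$, then replacing $J$ and $K$ by their $v$-closures $\bar{J}, \bar{K}$ yields proper $v$-ideals satisfying $JK \subset \bar{J}\bar{K} \subset \overline{JK} = I$, so $I = \bar{J}\bar{K}$. Hence a $v$-ideal cannot be factored into proper ideals if and only if it cannot be factored into proper $v$-ideals.

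For the unique factorization, I would argue by induction on a well-founded invariant that strictly decreases under $I \mapsto I'$; a convenient choice is the length of the sequence $I, I', I'', \ldots$ of successive transforms until the unit ideal is reached (this length is finite by the standard valuation-theoretic control of chains of quadratic transforms dominated by $v$). The base case is $I = \m$, which is the unique simple $\ord{A}$-ideal and is simple \emph{a fortiori} as a $v$-ideal. In the induction step one writes $I = \m^{r} \cdot J$ for a $v$-ideal $J$ whose transform equals $I'$, applies the induction hypothesis in $A'$ to factor $I'$ into simple $v$-ideals, and pulls these factors back to $A$ using the bijection between simple $v$-ideals of $A'$ and simple $v$-ideals of $A$ different from $\m$. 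Uniqueness follows because each simple $v$-ideal of $A$ is determined by the infinitely-near local ring along the sequence of quadratic transforms at which its transform first becomes the maximal ideal, so the multiplicities in any factorization of $I$ are intrinsic.

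The main obstacle is the product theorem itself: it is a genuinely two-dimensional, regular statement that rests on integral closure and the structure of complete ideals, and it fails both in higher dimension and at singular points. I would develop the quadratic transform formalism and the reductions indicated above in detail, and invoke \cite{ZS2}, Appendix 5, for the product theorem and the accompanying book-keeping with chains of infinitely near points, rather than reproduce those several pages of valuation-theoretic combinatorics here.
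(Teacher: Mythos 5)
Your handling of the first assertion is correct and pleasantly elementary: if a $v$-ideal $I$ factors as $I=JK$ with $J,K$ proper, then passing to the $v$-closures $\bar J=\set{a \in A}{v(a)\ge v(J)}$, $\bar K$ gives proper $v$-ideals with $JK\subset\bar J\bar K\subset\overline{JK}=I$, hence $I=\bar J\bar K$; note this argument does not even use the product theorem you announce. The genuine problem is that announced product theorem itself: in a two-dimensional regular local ring the product of two $v$-ideals need \emph{not} be a $v$-ideal. Take for $A$ the localization of $R[x,y]$ at $(x,y)$ and let $v$ be the monomial valuation with $v(x)=2$, $v(y)=3$, which dominates $A$. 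Then $I=(x^{2},y)=\set{a\in A}{v(a)\ge 3}$ is a (simple) $v$-ideal, but $I^{2}=(x^{4},x^{2}y,y^{2})$ does not contain $x^{3}$, whose value is $6=2\,v(I)$, so $I^{2}$ is not a $v$-ideal. What is true, and what Zariski's theory actually runs on, is the product theorem for \emph{complete} (integrally closed) ideals. This distinction is exactly why the paper does not argue inside the class of $v$-ideals at all: it invokes the unique factorization theorem of Zariski--Samuel for complete ideals (\cite{ZS2}, Appendix 5, Theorem 3) and transfers it to $v$-ideals via Remark \ref{vic}, using the two facts that every $v$-ideal is complete and that, in a factorization of a $v$-ideal into simple complete ideals, every factor is automatically a $v$-ideal.

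Because of this, your uniqueness induction does not close as written. The steps you sketch --- recovering $I$ from its order and its transform, writing $I=\m^{r}\cdot J$, and multiplying back the inverse transforms of the simple factors of $I'$ --- are exactly the places where one must know that the ideals produced remain in the class over which one is inducting, and for $v$-ideals they do not (the example above shows that even a product of simple $v$-ideals can leave the class); in \cite{ZS2} these steps are justified only for complete ideals, with integral closure doing the real work. The repair is either to carry out your induction entirely in the category of complete ideals and then add the bridge back to $v$-ideals supplied by Remark \ref{vic}, or simply to quote the complete-ideal theorem as the paper does. Two smaller points: the statement also covers $v$-ideals that are not $\m$-primary, so your transform induction needs a preliminary reduction (splitting off the principal part) before it applies; and the finiteness of the chain of transforms, which you take as your induction invariant, is itself one of the nontrivial inputs from \cite{ZS2}, so it should be flagged as such rather than assumed.
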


\begin{remark}

	Actually, Zariski and Samuel proved this unique factorization theorem for complete ideals (\cite{ZS2}, Appendix 5, Theorem 3). In an integrally closed domain $A$, an ideal $I$ is said to be \textbf{complete} if it is integrally closed in $\Quot{A}$, i.e., for all $a \in \Quot{A}$ such that $a^{m} + b_{1}a^{m-1} + \cdots + b_{m} = 0$, where $b_{j} \in I^{j}$, we already have $a \in I$.
	But if $v$ is a valuation of $A$ that is non-negative on $A$, then every $v$-ideal $I$ of $A$ is complete, and if a $v$-ideal $I$ is a product of simple complete ideals, then every factor is already a $v$-ideal.

\label{vic}
\end{remark}

In \cite{AJM}, Alvis, Johnston and Madden give a sufficient condition for the simplicity of the separating ideal of two points in the real spectrum that are centered at the same maximal ideal.

\begin{proposition}

	Suppose $\alpha,\beta \in \Sper{A}$ are both centered at the maximal ideal $\m$ of $A$. Let $k := A / \m$. If $I / I^{\alpha} \cong k$ for all $v_{\alpha}$-ideals $I$ which properly contain $\bqf{\alpha}{\beta}$, then $\bqf{\alpha}{\beta}$ is simple.

\label{Simp}
\end{proposition}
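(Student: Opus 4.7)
The plan is to argue by contradiction, in the 2-dimensional case where the unique factorization theorem of Zariski--Samuel (Theorem \ref{ZarUniFac}) together with Remark \ref{vic} applies. Suppose $\bqf{\alpha}{\beta}$ is not simple; then it factors as $\bqf{\alpha}{\beta} = IJ$ with $I,J$ proper $v_{\alpha}$-ideals. Since the $v_{\alpha}$-ideals of $A$ form a chain, I may arrange $I \supseteq J$. If $J = \bqf{\alpha}{\beta}$, Nakayama (with $I \subseteq \m$) forces $\bqf{\alpha}{\beta} = (0)$, which is the trivial case; so I may assume $I \supseteq J \supsetneq \bqf{\alpha}{\beta}$, so that both $I$ and $J$ strictly contain $\bqf{\alpha}{\beta}$ and the hypothesis yields $I/I^{\alpha} \cong k$ and $J/J^{\alpha} \cong k$.

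The next step produces a very compact description of $\bqf{\alpha}{\beta}$. Since $\m I \subseteq I^{\alpha}$ (because $v_{\alpha}(\m I) > v_{\alpha}(I)$), the quotient $I/I^{\alpha}$ carries a genuine $k$-vector space structure, and its being one-dimensional lets me pick $a \in I$ with $v_{\alpha}(a) = v_{\alpha}(I)$ such that $I = aA + I^{\alpha}$; likewise $J = cA + J^{\alpha}$. Expanding,
\[
\bqf{\alpha}{\beta} = IJ = acA + aJ^{\alpha} + cI^{\alpha} + I^{\alpha}J^{\alpha},
\]
and each of the last three summands is a $v_{\alpha}$-ideal of value strictly larger than $v_{\alpha}(ac) = v_{\alpha}(\bqf{\alpha}{\beta})$, so each is contained in $\bqf{\alpha}{\beta}^{\alpha}$. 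Hence $\bqf{\alpha}{\beta} = acA + \bqf{\alpha}{\beta}^{\alpha}$; in other words the class of $ac$ alone generates $\bqf{\alpha}{\beta}/\bqf{\alpha}{\beta}^{\alpha}$ as an $A$-module.

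I then exploit the separating-ideal structure to begin extracting a sign contradiction. Because $a \in I \setminus I^{\alpha} \subseteq I \setminus \bqf{\alpha}{\beta}$ and every sign-changing element of $A$ lies in $\bqf{\alpha}{\beta}$, the element $a$ does not change sign between $\alpha$ and $\beta$; after negating if necessary, $a(\alpha), a(\beta) \ge 0$, and similarly $c(\alpha), c(\beta) \ge 0$, so $(ac)(\alpha), (ac)(\beta) \ge 0$. Now pick a sign-changing generator $b$ of $\bqf{\alpha}{\beta}$ with $b(\alpha) \ge 0 \ge b(\beta)$ and $v_{\alpha}(b) = v_{\alpha}(\bqf{\alpha}{\beta})$; write $b = r \cdot ac + s$ with $s \in \bqf{\alpha}{\beta}^{\alpha}$. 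Comparing $v_{\alpha}$-values forces $r$ to be a unit of $A$, and a unit cannot lie in the proper ideal $\bqf{\alpha}{\beta}$, so $r$ itself does not change sign; after negating, $r(\alpha), r(\beta) > 0$, giving $(rac)(\beta) > 0$ and hence $s(\beta) < 0$.

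The main obstacle is converting the information \emph{$s \in \bqf{\alpha}{\beta}^{\alpha}$ with $s(\beta) < 0$} into an actual contradiction. I expect to close the argument by showing that the $v_{\alpha}$- and $v_{\beta}$-chains agree above $\bqf{\alpha}{\beta}$---this follows from part (e) of the Proposition in Section 3, since the $v_{\alpha}$-ideals strictly above $\bqf{\alpha}{\beta}$ are precisely the $\alpha$-convex ideals there, hence the $\beta$-convex ones, hence the $v_{\beta}$-ideals strictly above $\bqf{\alpha}{\beta}$. Running the expansion of Step 2 for $v_{\beta}$ in place of $v_{\alpha}$ (with the same $a,c$, since $I^{\alpha} = I^{\beta}$ and $J^{\alpha} = J^{\beta}$ as $v_{\beta}$-ideals above $\bqf{\alpha}{\beta}$) yields $\bqf{\alpha}{\beta} = acA + \bqf{\alpha}{\beta}^{\beta}$. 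Comparing the two presentations provides the $v_{\beta}$-smallness estimate on $s$ needed to contradict $s(\beta) \le -(rac)(\beta) < 0$ by an infinitesimal comparison in $k(\beta)$. Making this transfer to the $\beta$-side precise is the delicate technical point; the rest of the argument is routine bookkeeping with $v$-ideals.
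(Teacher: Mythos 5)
Your Steps 1--3 are sound and essentially repackage the paper's argument: factor $\bqf{\alpha}{\beta}=IJ$ via Theorem \ref{ZarUniFac}, use $I/I^{\alpha}\cong k\cong J/J^{\alpha}$ to get $I=aA+I^{\alpha}$, $J=cA+J^{\alpha}$, hence $\bqf{\alpha}{\beta}=acA+\bqf{\alpha}{\beta}^{\alpha}$, and observe that $a,c$ and the unit coefficient $r$ cannot change sign. (Two small points: $r(\alpha)>0$ is forced by the $\alpha$-side estimate once $b$ is fixed, you cannot ``negate'' $r$ freely; and the non-sign-change of $a,c$ gives strict inequalities $a(\beta),c(\beta)>0$, which you need and do get since $a,c\notin\bqf{\alpha}{\beta}\supset\supp{\beta}$.) The genuine gap is precisely the step you defer: you know only $s\in\bqf{\alpha}{\beta}^{\alpha}$, i.e.\ $v_{\alpha}(s)>v_{\alpha}(\bqf{\alpha}{\beta})$, and what the contradiction requires is $v_{\beta}(s)>v_{\beta}(\bqf{\alpha}{\beta})$. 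Part (e) of the proposition in Section 3 does not help here: it applies to ideals \emph{containing} $\bqf{\alpha}{\beta}$, whereas $\bqf{\alpha}{\beta}^{\alpha}$ and $\bqf{\alpha}{\beta}^{\beta}$ lie strictly below it, and in general they differ. Nor does ``comparing the two presentations'' $\bqf{\alpha}{\beta}=acA+\bqf{\alpha}{\beta}^{\alpha}=acA+\bqf{\alpha}{\beta}^{\beta}$ transfer membership of your particular remainder $s=b-rac$ into $\bqf{\alpha}{\beta}^{\beta}$; equality of ideals says nothing about this specific element. Indeed, if your $b$ has minimal $v_{\alpha}$-value but not minimal $v_{\beta}$-value in $\bqf{\alpha}{\beta}$ (which can happen --- this is exactly why the paper labors to produce an element minimal for \emph{both} valuations), then $v_{\beta}(s)=\min\{v_{\beta}(b),v_{\beta}(rac)\}=v_{\beta}(\bqf{\alpha}{\beta})$, so $s\notin\bqf{\alpha}{\beta}^{\beta}$ and the inequality $s(\beta)\le-(rac)(\beta)$ is perfectly consistent: no contradiction is available along the route you sketch. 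So the closing step is not routine bookkeeping; it is the heart of the proof.

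The repair is the paper's simultaneous two-valuation control. Either strengthen your Step 2 to $\bqf{\alpha}{\beta}=acA+\bigl(\bqf{\alpha}{\beta}^{\alpha}\cap\bqf{\alpha}{\beta}^{\beta}\bigr)$ by expanding an arbitrary element of $IJ$ as $\sum_{k}a_{k}b_{k}$ and noting that any term with a factor in $I^{\alpha}=I^{\beta}$ or $J^{\alpha}=J^{\beta}$ has value strictly above $v(\bqf{\alpha}{\beta})$ for \emph{both} $v_{\alpha}$ and $v_{\beta}$ (here you also use that $a,c$, lying outside $I^{\alpha}=I^{\beta}$, resp.\ $J^{\alpha}=J^{\beta}$, have minimal value in $I$, resp.\ $J$, for both valuations); with this sharper presentation your $b$, minimal only for $v_{\alpha}$, does suffice and your sign argument closes. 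Alternatively, follow the paper and first replace $b$ by a sign-changing element of minimal value for both $v_{\alpha}$ and $v_{\beta}$ (existence via the $x_{\alpha}+x_{\beta}$ trick), again combined with an error term in $\bqf{\alpha}{\beta}^{\alpha}\cap\bqf{\alpha}{\beta}^{\beta}$. Either way, the missing ingredient is exactly the statement that the error in the expansion can be taken small at $\alpha$ and $\beta$ at once, which your one-sided presentation $\bqf{\alpha}{\beta}=acA+\bqf{\alpha}{\beta}^{\alpha}$ has discarded.
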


\begin{proof}

	Let $x \in \bqf{\alpha}{\beta}$ such that
	\begin{enumerate}
	
		\item[(i)]{$x(\alpha) \ge 0$ and $x(\beta) \le 0$,}
		\item[(ii)]{$x$ has minimal $v_{\alpha}$-value in $\bqf{\alpha}{\beta}$, and}
		\item[(iii)]{$x$ has minimal $v_{\beta}$-value in $\bqf{\alpha}{\beta}$.}
	
	\end{enumerate}
	Note that such an element always exists, since there must be an element $x_{\alpha}$ that satisfies (i) and (ii) and there must be an element $x_{\beta}$ that satisfies (i) and (iii), and if $x_{\alpha}$ does not satisfy (iii) and $x_{\beta}$ does not satisfy (ii), then $x_{\alpha} + x_{\beta}$ satisfies all three conditions.
	
	Suppose $\bqf{\alpha}{\beta}$ is not simple. Then, by Theorem \ref{ZarUniFac}, it can be written as the product of two proper $v_{\alpha}$-ideals $I$ and $J$. Since they (properly) contain $\bqf{\alpha}{\beta}$, they are also $v_{\beta}$-ideals. Now we can write $x = \sum_{k = 1}^{r} a_{k}b_{k}$, where, for all $k \in \{ 1, \ldots, r \}$, $a_{k} \in I$ and $b_{k} \in J$. Note that if $a_{k} \in I^{\alpha} = I^{\beta}$ or $b_{k} \in J^{\alpha} = J^{\beta}$, then $a_{k}b_{k} \in \bqf{\alpha}{\beta}^{\alpha} \cap \bqf{\alpha}{\beta}^{\beta}$. Since $x$ satisfies (ii) and (iii), without loss of generality we can write $x = \sum_{k = 1}^{s} a_{k}b_{k} + c$, where $s \in \{ 1, \ldots r \}$, $a_{k} \in I \setminus I^{\alpha}$ and $b_{k} \in J \setminus J^{\alpha}$ for all $k \le s$, and $c \in \bqf{\alpha}{\beta}^{\alpha} \cap \bqf{\alpha}{\beta}^{\beta}$. Let $a \in I \setminus I^{\alpha}$ and $b \in J \setminus J^{\alpha}$. Since $I / I^{\alpha} \cong k \cong J / J^{\alpha}$, we have, for $k \le s$, that $a_{k} = u_{k}a + a_{k}'$  and $b_{k} = v_{k}b + b_{k}'$ for some $u_{k},v_{k} \in A^{\times}$, $a_{k}' \in I^{\alpha}$ and $b_{k}' \in  J^{\alpha}$. Then $x = ab \sum_{k = 1}^{s} u_{k}v_{k} + c'$, where $v_{\alpha}(c') > v_{\alpha}(\bqf{\alpha}{\beta}) = v_{\alpha}(x)$ and $v_{\beta}(c') > v_{\beta}(\bqf{\alpha}{\beta}) = v_{\beta}(x)$, and thus we can derive from (i) that $(x - c')(\alpha) \ge 0$ and $(x - c')(\beta) \le 0$. Since $x$ satisfies (ii) and (iii), we have $\sum_{k = 1}^{s} u_{k}v_{k} \in A^{\times}$. So $a$ or $b$ must change sign between $\alpha$ and $\beta$, but this is impossible, since they are not elements of $\bqf{\alpha}{\beta}$.

\end{proof}

\begin{definition}

	A \textbf{quadratic transform} of $A$ is a local ring $B = (A[x^{-1}\m])_{\p}$, where $\ord{A}(x) = 1$ and $\p$ is a prime ideal of $A[x^{-1}\m] = \set{\frac{a}{x^{m}}}{\ord{A}(a) \ge m}$ such that $\p \cap A = \m$.

\end{definition}

Under a suitable condition, one can extend orderings of $A$ to a quadratic transform of $A$ (\cite{AJM}).

\begin{lemma}

	Let $\alpha \in \Sper{A}$, and let $B = (A[x^{-1}\m])_{\p}$ be a quadratic transform of $A$. If $\supp{\alpha} \neq \m$, then there is a unique $\alpha' \in \Sper{B}$ such that $\alpha' \cap A = \alpha$.

\end{lemma}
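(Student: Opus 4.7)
The plan is to exploit that $A$ is a domain, so that $A[x^{-1}\m] \subseteq \Quot{A}$ and $B$ is a localization of this subring, and to show that the signs of all elements of $B$ under any putative $\alpha'$ are forced by $\alpha$. The key preliminary observation is that $x \notin \supp{\alpha}$: for any $z \in \m$ one has $z = x \cdot (z/x) \in x \cdot A[x^{-1}\m]$, hence $\m \subseteq x \cdot B$. If some $\alpha' \in \Sper{B}$ had $x \in \supp{\alpha'}$, then $\m B \subseteq \supp{\alpha'}$ would yield $\m \subseteq \supp{\alpha'} \cap A = \supp{\alpha}$, contradicting $\supp{\alpha} \neq \m$. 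Therefore any extension $\alpha'$ must satisfy $x \notin \supp{\alpha'}$, and in particular $x \notin \supp{\alpha}$.

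For uniqueness, I would argue sign by sign. Given $\alpha' \in \Sper{B}$ with $\alpha' \cap A = \alpha$, for any $a/x^{k} \in A[x^{-1}\m]$ (where $a \in \m^{k}$) the identity $(a/x^{k}) \cdot x^{k} = a$ combined with $\text{sign}_{\alpha'}(x) = \text{sign}_{\alpha}(x) \neq 0$ forces $\text{sign}_{\alpha'}(a/x^{k}) = \text{sign}_{\alpha}(a)/\text{sign}_{\alpha}(x)^{k}$. For a general $b = f/s \in B$ with $f \in A[x^{-1}\m]$ and $s \in A[x^{-1}\m] \setminus \p$, the prime $\supp{\alpha'} \cap A[x^{-1}\m]$ is contained in $\p$, so $s \notin \supp{\alpha'}$; hence $\text{sign}_{\alpha'}(b) = \text{sign}_{\alpha'}(f)/\text{sign}_{\alpha'}(s)$ is again determined by $\alpha$. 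Since these formulas fix the sign of every element of $B$ purely in terms of $\alpha$, any two extensions must coincide.

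For existence, I would begin from a representative $\phi_{\alpha} \colon A \to R(\alpha)$ realizing $\alpha$ in its real closure. Since $\phi_{\alpha}(x) \neq 0$, this extends to a ring homomorphism $\tilde\phi \colon A[x^{-1}\m] \to R(\alpha)$ via $a/x^{k} \mapsto \phi_{\alpha}(a)/\phi_{\alpha}(x)^{k}$, and it descends to $B = (A[x^{-1}\m])_{\p}$ provided $\ker\tilde\phi \subseteq \p$; the pulled-back positive cone then yields the desired $\alpha'$. The main obstacle is precisely this inclusion $\ker\tilde\phi \subseteq \p$: the kernel consists of fractions $a/x^{k}$ with $a \in \supp{\alpha} \cap \m^{k}$, and while $\supp{\alpha} \cdot A[x^{-1}\m] \subseteq \p$ is immediate from $\supp{\alpha} \subseteq \m = \p \cap A$, passing to the full kernel requires identifying it with the contraction to $A[x^{-1}\m]$ of the maximal ideal of $A_{\supp{\alpha}}$ (which contains $A[x^{-1}\m]$, since $x \notin \supp{\alpha}$) and exploiting the compatibility of the quadratic transform $\p$ with $\alpha$ to place this contraction inside $\p$. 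This compatibility step—rather than any of the sign-tracking above—is the technical heart of the proof.
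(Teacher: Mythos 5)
Your uniqueness half is correct and essentially complete: assuming an extension $\alpha'$ exists, the relation $\m \subseteq xA[x^{-1}\m]$ forces $x \notin \supp{\alpha'}$, hence $x \notin \supp{\alpha}$, and then the sign under $\alpha'$ of every $a/x^{k}$, and of every $f/s$ with $s \notin \p$, is determined by $\alpha$, so two extensions must coincide. (For what it is worth, the paper offers no proof of this lemma at all -- it is quoted from \cite{AJM} -- so the comparison can only be with the intended argument there.)

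The existence half, however, has a genuine gap, and it sits exactly at the two points you glossed over. First, a circularity: you establish $x \notin \supp{\alpha}$ only under the assumption that some extension $\alpha'$ exists, and then use $\phi_{\alpha}(x) \neq 0$ to build the extension; from the hypothesis $\supp{\alpha} \neq \m$ alone this does not follow (take $A = R[u,v]_{(u,v)}$, $x = u$, and $\alpha$ supported at $(u)$ -- and in that situation no extension exists, so the inference cannot be rescued). Second, the deferred containment $\ker\tilde\phi \subseteq \p$ is not a technical formality but is false for a general prime $\p$ of $A[x^{-1}\m]$ lying over $\m$: with $A = R[u,v]_{(u,v)}$, $x = u$, $\p = (u, v/u)$ and $\alpha$ supported at $(v-u)$, one has $v/u - 1 \in \ker\tilde\phi$ while $v/u - 1 \notin \p$, i.e.\ it is a unit of $B$; since $u\,(v/u - 1) = v - u$ would lie in the support of any extension although $u$ would not, $\alpha$ has no extension to $B$ at all, even though $\supp{\alpha} \neq \m$ and $x \notin \supp{\alpha}$. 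So there is no ``compatibility of $\p$ with $\alpha$'' to be extracted from the stated hypotheses, because none is assumed: read literally, only the uniqueness assertion holds for an arbitrary quadratic transform, and existence requires a hypothesis tying $\p$ to $\alpha$. In the only way the lemma is used later -- $B$ the quadratic transform along $v_{\alpha}$ -- this is automatic: $x$ has minimal value in $\m$, so $x \notin \supp{\alpha}$, and $\p$ consists of the elements of positive $v_{\alpha}$-value, so $\ker\tilde\phi = \set{b}{v_{\alpha}(b) = \infty} \subseteq \p$; the further extension of $\beta$ to that same transform uses $\bqf{\alpha}{\beta} \subsetneq \m$ and is one of the substantive points of \cite{AJM}. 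As written, your existence argument cannot be closed.
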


\begin{definition}
	
	Let $v$ be a non-trivial valuation (i.e., $\cent{v} \neq \supp{v}$) of $A$ that dominates $A$. The \textbf{quadratic transform of $A$ along $v$} is defined to be the ring $B = S^{-1} A[x^{-1}\m]$, where $x$ is an element of $\m$ of minimal value and $S = \set{a \in A[x^{-1}\m]}{v(a) = 0}$. $B$ is again a regular local ring, independent of the choice of $x$, $v$ is extendable to $B$ and it dominates $B$.
	
	We can then iterate this process and derive a sequence of quadratic transformations along $v$ starting from $A$, denoted by
	\[ A = A^{(0)} \prec A^{(1)} \prec \cdots. \]
	This sequence may be infinite.

\end{definition}

\begin{definition}

	Let $B = (A[x^{-1}\m])_{\p}$ be a quadratic transform of $A$. Let $I$ be an ideal of $A$ with $\ord{A}(I) = r$. Then $\frac{a}{x^{r}} \in A[x^{-1}\m]$ for all $a \in I$. Hence $I A[x^{-1}\m] = x^{r}I'$ for some ideal $I'$ of $A[x^{-1}\m]$. The ideal $T(I) := I'B$ is called the \textbf{transform of $I$ in $B$}. Now let $J$ be an ideal of $B$. Since $J$ is finitely generated, there is a smallest integer $n \in \N$ such that $x^{n}J = W(J)B$ for some ideal $W(J)$ of $A$, called the \textbf{inverse transform of $J$}.

\end{definition}

\begin{remarks}

	We consider a quadratic transform $B$ of $A$.
	\begin{enumerate}
	
		\item{For all ideals $I$ of $A$ and all ideals $J$ of $B$, we always have $T(W(J)) = J$, but in general only $W(T(I)) \supset I$.}
		\item{The transformation of ideals is not order-preserving.}
		\item{For all ideals $I,J$ of $A$, we have $T(IJ) = T(I)T(J)$.}
	
	\end{enumerate}

\end{remarks}

Zariski and Samuel showed that in dimension two, any simple $\m$-primary complete ideal can be transformed into a maximal ideal by a suitable sequence of quadratic transformations (again see \cite{ZS2}, Appendix 5). Applying this result to simple $\m$-primary $v$-ideals yields the following.

\begin{theorem}

	Suppose the dimension of $A$ is two. Let $v$ be a non-trivial valuation of $A$ that dominates $A$ and is different from the order valuation $\ord{A}$. Let $A'$ be the quadratic transform of $A$ along $v$, and let $\m'$ be its maximal ideal. Let $\mathcal{S}$ be the set of all simple $\m$-primary $v$-ideals of $A$, and let $\mathcal{S'}$ be the set of all simple $\m'$-primary $v$-ideals of $A'$. Then $A'$ has again dimensional two, the residue field of $A'$ is an algebraic extension of the residue field of $A$, every transform of a $\m$-primary $v$-ideal is again a $v$-ideal, and we have that the sets $\mathcal{S} \setminus \{ \m \}$ and $\mathcal{S'}$ are in one-to-one correspondence via the order-preserving maps $\mathcal{I} \mapsto T(\mathcal{I})$ and $\mathcal{J} \mapsto W(\mathcal{J})$.\\
	Furthermore, for every simple $\m$-primary $v$-ideal $\mathcal{I}$, there exists some $s \in \N$ and a sequence $A = A^{(0)} \prec \cdots \prec A^{(s)}$ of quadratic transformations along $v$ such that the iterated transform $T^{(s)}(\mathcal{I})$ of $\mathcal{I}$ equals $\m^{(s)}$, the maximal ideal in $A^{(s)}$.

\label{ZarQuadTrans}
\end{theorem}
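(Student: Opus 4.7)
The plan is to follow the classical treatment in Appendix 5 of \cite{ZS2}, translating between statements about complete ideals and $v$-ideals via Remark \ref{vic}.

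First, I would establish the structural claims about $A'$. Choose $x \in \m$ of minimal $v$-value; since $\ord{A}(x) = 1$, we can write $\m = (x,y)$, and then $A[x^{-1}\m] = A[y/x]$, whose reduction modulo $x$ is a polynomial ring $k[Y]$ over the residue field $k = A/\m$. The center $\p$ of $v$ in $A[y/x]$ contains $x$, and because $v \neq \ord{A}$, the image of $\p$ in $k[Y]$ is a nonzero prime $(f(Y))$ with $f$ irreducible. Hence $A' = A[y/x]_{\p}$ is a two-dimensional regular local ring with maximal ideal $(x, f(y/x))$ and residue field $k[Y]/(f(Y))$, a finite algebraic extension of $k$, and $v$ dominates $A'$.

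Second, I would show that for an $\m$-primary $v$-ideal $I$ with $r := \ord{A}(I)$, the transform $T(I)$ is again an $\m'$-primary $v$-ideal. From $I A[x^{-1}\m] = x^{r}J$ and $T(I) = J A'$, every generator of $T(I)$ has $v$-value at least $v(I) - r\,v(x)$. The reverse inclusion $\set{z \in A'}{v(z) \ge v(I) - r\,v(x)} \subset T(I)$ follows because $A'$ is regular local dominated by $v$, by a Nakayama argument on the associated graded filtration. Multiplicativity $T(IJ) = T(I)T(J)$ together with $\m^{N} \subset I$ then forces $\m'^{N'} \subset T(I)$, so $T(I)$ is $\m'$-primary.

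Third, I would verify the bijection $\mathcal{S} \setminus \{\m\} \leftrightarrow \mathcal{S}'$. Given a simple $\mathcal{I} \in \mathcal{S} \setminus \{\m\}$, any factorization $T(\mathcal{I}) = \mathcal{J}_{1}\mathcal{J}_{2}$ into proper $v$-ideals of $A'$ pulls back under $W$ (using the previously stated identities and multiplicativity) to a factorization $\mathcal{I}\cdot\m^{t} = W(\mathcal{J}_{1})\,W(\mathcal{J}_{2})$, contradicting the simplicity of $\mathcal{I}$ via the unique factorization of Theorem \ref{ZarUniFac}; hence $T(\mathcal{I}) \in \mathcal{S}'$. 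The identity $T(W(\mathcal{J})) = \mathcal{J}$ from the earlier remarks supplies one inverse; the reverse identity $W(T(\mathcal{I})) = \mathcal{I}$ on simple $\mathcal{I} \neq \m$ follows by composing both maps and invoking unique factorization. Order preservation is a direct consequence of the multiplicativity of $T$.

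Finally, for the termination of successive transforms along $v$, I would argue that the colength $\ell_{A^{(i)}}\bigl(A^{(i)}/T^{(i)}(\mathcal{I})\bigr)$ strictly decreases at each non-trivial step, via the Hoskin-Deligne-type length drop at a quadratic transformation (whose contribution is governed by $r$ and the residue-field extension). Since this is a decreasing sequence of positive integers, it must stabilize; by the bijection established in step three, stabilization can only occur when the iterated transform coincides with the maximal ideal of the corresponding quadratic transform, giving $T^{(s)}(\mathcal{I}) = \m^{(s)}$ after finitely many steps. The main obstacle is precisely this colength-drop estimate, which is the analytic core of the Zariski-Samuel desingularization theorem and rests on a careful comparison of the Hilbert functions of $\mathcal{I}$ and $T(\mathcal{I})$; everything else reduces to bookkeeping with transforms, multiplicativity, and the unique factorization of Theorem \ref{ZarUniFac}.
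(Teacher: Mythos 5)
This statement is not proved in the paper at all: it is quoted from Zariski--Samuel (\cite{ZS2}, Appendix 5), with Remark \ref{vic} supplying the translation between complete ideals and $v$-ideals. Your proposal is a reconstruction of that classical argument, and its skeleton (structure of the quadratic transform, transforms of $v$-ideals, the bijection on simple ideals, termination) is the right one; but the two steps that carry essentially all of the content are asserted rather than proved, so as it stands the proposal has genuine gaps rather than being an independent proof.

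Concretely: (a) In your second step, the inclusion $\set{z \in A'}{v(z)\ge v(I)-r\,v(x)} \subset T(I)$ ``by a Nakayama argument on the associated graded filtration'' is not an argument and is in fact circular --- knowing only that the generators of $T(I)$ have value at least $v(I)-r\,v(x)$, the claim that $T(I)$ exhausts the entire valuation ideal of that value \emph{is} the statement that the transform of a $v$-ideal is a $v$-ideal, which is what you are trying to establish; in \cite{ZS2} this is obtained through the theory of contracted and complete ideals (transforms of complete ideals are complete, contracted ideals from $A[x^{-1}\m]$, etc.), not by Nakayama. (b) The termination step is explicitly deferred: the strict drop of $\ell\bigl(A^{(i)}/T^{(i)}(\mathcal{I})\bigr)$ under a quadratic transformation is a Hoskin--Deligne type result whose proof is of comparable depth to the theorem itself, so invoking it leaves what you yourself call the analytic core unproved. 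Two smaller points: your third step uses multiplicativity of the inverse transform $W$, whereas only $T(IJ)=T(I)T(J)$ is available, and the contraction bookkeeping there needs care; and the claim that the center of $v$ in $A[x^{-1}\m]$ maps to a nonzero prime of $k[Y]$ ``because $v \neq \ord{A}$'' needs the short argument that if the center were $(x)$ then $A'$ would be a one-dimensional valuation ring and $v$ would restrict to $\ord{A}$ on $A$. If the intention is, as in the paper, simply to cite \cite{ZS2} together with Remark \ref{vic}, that is legitimate; but then the proposal should say so rather than present these steps as proved.
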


\begin{remark}

	Suppose the dimension of $A$ is two. Let $v$ be a non-trivial valuation of $A$ that dominates $A$. Consider the quadratic transform of $A$ along $v$. Let $I$ be a $\m$-primary $v$-ideal of $A$, and let $r = \ord{A}{I}$ be the order of $I$. By Theorem \ref{ZarQuadTrans}, the transform $T(I)$ is again a $v$-ideal, so we may consider the following composition of $k$-vector space homomorphisms
	\[ I \leftrightarrow x^{-r} I \hookrightarrow T(I) \twoheadrightarrow T(I) / T(I)^{v}, \ a \mapsto \frac{a}{x^{r}} \mapsto \frac{a}{x^{r}} \mapsto \frac{a}{x^{r}} + T(I)^{v}, \]
	where $x$ is an element of $\m$ having minimal value. The kernel of this composition is the ideal $I^{v}$, hence $I / I^{v} \subset T(I) / T(I)^{v}$.

\label{viq2}
\end{remark}

Although in general, the transformation of ideals is not order-preserving, using the Unique Factorization Theorem \ref{ZarUniFac}, Theorem \ref{ZarQuadTrans} and the multiplicativeness of the ideal transformation, the following can be observed.

\begin{lemma}

	Suppose the dimension of $A$ is two. Let $v$ be a non-trivial valuation of $A$ that dominates $A$ and is different from the order valuation $\ord{A}$, and let $I$ be a $v$-ideal such that $I$ properly contains a simple $\m$-primary $v$-ideal $\mathcal{I}$. Then $T^{(s)}(I) = A^{(s)}$, where $T^{(s)}$ denotes the iterated ideal transformation with respect to the sequence of quadratic transformations $A = A^{(0)} \prec \cdots \prec A^{(s)}$ of $A$ along $v$ with the property $T^{(s)}(\mathcal{I}) = \m^{(s)}$.

\label{IdPropTrans}
\end{lemma}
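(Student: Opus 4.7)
The plan is to reduce the claim to simple $v$-ideals via the unique factorization of Theorem \ref{ZarUniFac}, and then to iterate the order-preserving bijection of Theorem \ref{ZarQuadTrans}.

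If $I = A$, the statement is trivial, so I would assume $I$ is proper. Because $\mathcal{I}$ is $\m$-primary and $\mathcal{I} \subsetneq I \subsetneq A$, the ideal $I$ is itself $\m$-primary. By Theorem \ref{ZarUniFac}, I write $I = \mathcal{I}_{1} \cdots \mathcal{I}_{n}$ (with multiplicities) as a product of simple $\m$-primary $v$-ideals. Each factor satisfies $\mathcal{I}_{k} \supset I \supsetneq \mathcal{I}$, so $\mathcal{I} \subsetneq \mathcal{I}_{k}$. Since the ideal transform is multiplicative, so is the iterated transform $T^{(s)}$, and it therefore suffices to show $T^{(s)}(\mathcal{I}_{k}) = A^{(s)}$ for every $k$.

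To this end, I would fix any simple $v$-ideal $\mathcal{J}$ of $A$ with $\mathcal{I} \subsetneq \mathcal{J}$ and argue by contradiction. Suppose $T^{(s)}(\mathcal{J}) \neq A^{(s)}$. By Theorem \ref{ZarQuadTrans}, $T^{(s)}(\mathcal{J})$ is then a simple $\m^{(s)}$-primary $v$-ideal of $A^{(s)}$, which forces $T^{(j)}(\mathcal{J})$ to be a simple $\m^{(j)}$-primary $v$-ideal of $A^{(j)}$ distinct from $\m^{(j)}$ for every $j < s$ (otherwise one further quadratic transform would already produce the unit ideal, since $T(\m^{(j)}) = A^{(j+1)}$). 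The same remarks apply to $\mathcal{I}$, because $T^{(s)}(\mathcal{I}) = \m^{(s)}$ is still proper, so $T^{(j)}(\mathcal{I}) \in \mathcal{S}^{(j)} \setminus \{\m^{(j)}\}$ for every $j<s$. The order-preserving property of the bijection in Theorem \ref{ZarQuadTrans}, together with its injectivity, then propagates the strict inclusion $\mathcal{I} \subsetneq \mathcal{J}$ through every step, giving $T^{(s)}(\mathcal{I}) \subsetneq T^{(s)}(\mathcal{J})$. But $T^{(s)}(\mathcal{I}) = \m^{(s)}$ is the maximal ideal of $A^{(s)}$, so $T^{(s)}(\mathcal{J})$ would properly contain $\m^{(s)}$ and hence equal $A^{(s)}$, contradicting the supposition.

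The main obstacle I anticipate is the bookkeeping in this last step, namely verifying that at every intermediate $j < s$ both $T^{(j)}(\mathcal{I})$ and $T^{(j)}(\mathcal{J})$ really do lie in $\mathcal{S}^{(j)} \setminus \{\m^{(j)}\}$, so that the bijection and its order preservation from Theorem \ref{ZarQuadTrans} are applicable at each stage. Once this is in hand, the remaining pieces --- multiplicativity of $T^{(s)}$, the trivial case $I = A$, and the containment $\mathcal{I} \subsetneq \mathcal{I}_{k}$ for each simple factor of $I$ --- are immediate from the setup.
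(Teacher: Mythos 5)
Your proposal is correct and follows exactly the route the paper indicates (it states the lemma as an observation from Theorem \ref{ZarUniFac}, Theorem \ref{ZarQuadTrans} and multiplicativity of the transform, without writing out details): factor $I$ into simple $\m$-primary $v$-ideals, reduce by multiplicativity of $T^{(s)}$, and push the strict inclusion $\mathcal{I} \subsetneq \mathcal{I}_{k}$ through the order-preserving bijection, using $T(\m^{(j)}) = A^{(j+1)}$ to handle the degenerate steps. The bookkeeping you flag (that $T^{(j)}(\mathcal{I})$ and, under the contradiction hypothesis, $T^{(j)}(\mathcal{J})$ stay in $\mathcal{S}^{(j)} \setminus \{\m^{(j)}\}$ for $j < s$) goes through by the induction you sketch, so there is no gap.
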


Now we assume that $A$ has dimension two and that the residue field of $A$ is real closed, and we consider quadratic transformations along a valuation corresponding to a point in the real spectrum of $A$. The next theorem is the main result of \cite{AJM} and important for the proof of the (two-dimensional) Connectedness Conjecture below.

\begin{theorem}{(Alvis, Johnston, Madden)}

	Let $A = (A,\m,R)$ be a two-dimensional regular local domain such that $R$ is real closed. Let $\alpha,\beta \in \Sper{A}$ such that $\cent{\alpha} = \m = \cent{\beta}$ and $\bqf{\alpha}{\beta} \subsetneq \m = \sqrt{\bqf{\alpha}{\beta}}$.\\
	Consider the quadratic transformation along $v_{\alpha}$. Then:
	\[ T(\bqf{\alpha}{\beta}) = \bqf{\alpha'}{\beta'} \ \textrm{and} \ W(\bqf{\alpha'}{\beta'}) = \bqf{\alpha}{\beta}, \]
	where $\alpha'$ and $\beta'$ are the unique extensions of $\alpha$ and $\beta$ with respect to the quadratic transform of $A$ along $v_{\alpha}$.\\	
	Furthermore, there exists an integer $r \in \N$ and a sequence $A = A^{(0)} \prec \cdots \prec A^{(r)}$ of quadratic transformations along $v_{\alpha}$ such that the iterated transform $T^{(r)}(\bqf{\alpha}{\beta})$ of $\bqf{\alpha}{\beta}$ equals $\m^{(r)}$, the maximal ideal in $A^{(r)}$.

\label{AJMQuadTrans}
\end{theorem}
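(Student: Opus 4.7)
The proof has three stages: (I) establish that $\bqf{\alpha}{\beta}$ is a simple $\m$-primary $v_{\alpha}$-ideal; (II) match $T(\bqf{\alpha}{\beta}) = \bqf{\alpha'}{\beta'}$ under a single quadratic transformation (and apply $W$); (III) iterate via Theorem \ref{ZarQuadTrans} until the transform becomes a maximal ideal.

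\textbf{Simplicity (I).} I would apply Proposition \ref{Simp}: it suffices to show that $I/I^{\alpha} \cong R$ as an $R$-vector space for every $v_{\alpha}$-ideal $I$ with $\bqf{\alpha}{\beta} \subsetneq I \subseteq \m$. By Remark \ref{viq1}, $I/I^{\alpha}$ embeds as an $R$-subspace of $K_{v_{\alpha}}$. Since the $v_{\alpha}$-ideals are totally ordered and $\bqf{\alpha}{\beta} \subsetneq \m$, we have $\bqf{\alpha}{\beta} \subseteq \m^{\alpha}$; hence I can choose $b \in I \setminus I^{\alpha}$ with $b \notin \bqf{\alpha}{\beta}$, and such $b$ (combined with $\supp{\alpha} + \supp{\beta} \subset \bqf{\alpha}{\beta}$) has the same nonzero sign at $\alpha$ and $\beta$, WLOG positive. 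For any $a \in I$ with $v_{\alpha}(a) = v_{\alpha}(b)$, I must show $\overline{a/b} \in R$. This is the critical use of real-closedness: if $\overline{a/b}$ lay in $K_{v_{\alpha}} \setminus R$, then one could locate some $\bar r \in R$ for which $a - \bar r b$ changes sign between $\alpha$ and $\beta$, while $v_{\alpha}(a - \bar r b) = v_{\alpha}(b) < v_{\alpha}(\bqf{\alpha}{\beta})$ forces $a - \bar r b \notin \bqf{\alpha}{\beta}$, contradicting that elements outside $\bqf{\alpha}{\beta}$ cannot change sign. Therefore $a \equiv \bar r b \pmod{I^{\alpha}}$ for some $\bar r \in R$, so $I/I^{\alpha} = R \cdot \overline{b} \cong R$.

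\textbf{Matching (II).} The inclusion $\bqf{\alpha}{\beta} \subseteq \m^{\alpha}$ again lets me choose $x \in \m$ of minimal $v_{\alpha}$-value with $x \notin \bqf{\alpha}{\beta}$, so $x$ is positive at both $\alpha$ and $\beta$. Let $r = \ord{A}(\bqf{\alpha}{\beta})$ and $A' = S^{-1}A[x^{-1}\m]$ the quadratic transform along $v_{\alpha}$. For any generator $a \in \bqf{\alpha}{\beta}$ with $a(\alpha) \ge 0$, $a(\beta) \le 0$, the element $a/x^{r} \in A'$ inherits the sign change since $x^{r} > 0$ at $\alpha',\beta'$; this gives $T(\bqf{\alpha}{\beta}) \subseteq \bqf{\alpha'}{\beta'}$. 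Conversely, any generator $q \in \bqf{\alpha'}{\beta'}$ can be cleared of denominators by multiplying through by $x^{n}$ and a unit $s \in S$ of definite sign at $\alpha',\beta'$; the resulting element $p \in A$ must change sign between $\alpha$ and $\beta$, so $p \in \bqf{\alpha}{\beta}$, whence $q \in T(\bqf{\alpha}{\beta})$. Equality $T(\bqf{\alpha}{\beta}) = \bqf{\alpha'}{\beta'}$ follows, and applying $W$---which by Theorem \ref{ZarQuadTrans} inverts $T$ on simple $\m$-primary $v_{\alpha}$-ideals---yields $W(\bqf{\alpha'}{\beta'}) = \bqf{\alpha}{\beta}$.

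\textbf{Iteration (III) and main obstacle.} If $v_{\alpha} = \ord{A}$, simplicity forces $\bqf{\alpha}{\beta} = \m$ (the unique simple $\ord{A}$-ideal) and $r = 0$. Otherwise, Theorem \ref{ZarQuadTrans} furnishes a sequence $A = A^{(0)} \prec \cdots \prec A^{(r)}$ along $v_{\alpha}$ with $T^{(r)}(\bqf{\alpha}{\beta}) = \m^{(r)}$; induction on $i$, applying (II) at each step to the pair $(\alpha^{(i)}, \beta^{(i)})$ (which satisfies the hypotheses because $\bqf{\alpha^{(i)}}{\beta^{(i)}} = T^{(i)}(\bqf{\alpha}{\beta})$ is simple and $\m^{(i)}$-primary), yields $\bqf{\alpha^{(r)}}{\beta^{(r)}} = \m^{(r)}$. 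The main obstacle is (I): the transcendentality-to-sign-change argument must be made rigorous by carefully selecting $\bar r \in R$ to separate the residues of $a/b$ in the distinct fields $K_{v_{\alpha}}$ and $K_{v_{\beta}}$, exploiting that $R$ is real closed to preclude algebraic elements of either residue field lying outside $R$. This interplay between two orderings and two residue fields is what makes the step subtle; the remaining steps are essentially bookkeeping inside the Zariski--Samuel factorization theory.
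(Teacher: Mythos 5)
Your overall skeleton (simplicity via Proposition \ref{Simp}, then matching $T(\bqf{\alpha}{\beta})=\bqf{\alpha'}{\beta'}$, then iterating with Theorem \ref{ZarQuadTrans}) is the same as the paper's, but the decisive step (I) is not proved. You reduce everything to the claim: if $a,b\in I\setminus I^{\alpha}$ and the residue $\overline{a/b}\in K_{v_{\alpha}}\setminus R$, then some $\bar r\in R$ makes $a-\bar r b$ change sign between $\alpha$ and $\beta$. Unwinding the signs (with $b>0$ at both points), this says precisely that the cut which the residue of $a/b$ defines over $R$ inside $K_{v_{\alpha}}$ (with the $\alpha$-induced order) differs from the cut its residue defines inside $K_{v_{\beta}}$ (with the $\beta$-induced order). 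Nothing in your argument rules out that these two cuts coincide while lying outside $R$; when $v_{\alpha}$ is a prime divisor, $K_{v_{\alpha}}$ has transcendence degree $1$ over $R$, so residues genuinely can lie outside $R$, and no choice of $\bar r$ then produces a sign change. Excluding this situation for $v_{\alpha}$-ideals $I\supsetneq\bqf{\alpha}{\beta}$ is exactly the hard content of the theorem: the paper splits into the cases ``$v_{\alpha}$ is/is not a prime divisor'', quotes Theorem 4.4 of \cite{AJM} in the prime-divisor case to get a simple $\m$-primary $v_{\alpha}$-ideal inside $\bqf{\alpha}{\beta}$ (hence properly inside $I$), transforms it into a maximal ideal by Theorem \ref{ZarQuadTrans}, uses Lemma \ref{IdPropTrans} to see $T^{(s)}(I)=A^{(s)}$, uses real closedness of $R$ to identify $A^{(s)}/\m^{(s)}=R$, and only then gets $I/I^{\alpha}\cong R$ via Remark \ref{viq2} (Remark \ref{viq1} suffices only when $K_{v_{\alpha}}=R$). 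Your ``carefully select $\bar r$'' sentence is a placeholder for this entire mechanism, not a proof of it; as you yourself flag, this is the main obstacle, and it remains open in your write-up.

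There is also a gap in stage (II). The inclusion $T(\bqf{\alpha}{\beta})\subseteq\bqf{\alpha'}{\beta'}$ is fine (this is Lemma 3.2 of \cite{AJM}), but your converse via clearing denominators does not work as stated: writing a generator $q\in\bqf{\alpha'}{\beta'}$ as $q=a/(sx^{n})$ with $a\in A$ changing sign only gives $q\in x^{-n}\bqf{\alpha}{\beta}A'$, and you have no control on $n$ relative to $r=\ord{A}(\bqf{\alpha}{\beta})$, so you cannot conclude $q\in T(\bqf{\alpha}{\beta})$ directly. The paper instead proves $W(\bqf{\alpha'}{\beta'})\subseteq\bqf{\alpha}{\beta}$ (Lemma 4.7 of \cite{AJM}, which again uses the $I/I^{\alpha}\cong R$ considerations) and then deduces both equalities by applying $T$ and $W$ and using the order-preserving bijection between simple $\m$-primary and $\m'$-primary $v$-ideals from Theorem \ref{ZarQuadTrans}; this is where the simplicity of $\bqf{\alpha}{\beta}$, established in stage (I), is indispensable. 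Your stage (III) is consistent with the paper, but it stands on the unproven stages (I) and (II).
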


\begin{proof}

	Let $\alpha,\beta \in \Sper{A}$ such that $\cent{\alpha} = \cent{\beta} = \m$ and $\bqf{\alpha}{\beta}$ is $\m$-primary, but properly contained in $\m$.
	
	Let $I$ be a $v_{\alpha}$-ideal that properly contains $\bqf{\alpha}{\beta}$. Then $I$ properly contains a simple $\m$-primary $v_{\alpha}$-ideal:\\
	Since $\sqrt{\bqf{\alpha}{\beta}} = \m$, there are only finitely many $v_{\alpha}$-ideals bigger than $I$. In \cite{ZS2} (Appendix 5), it is shown that $v_{\alpha}$ is a prime divisor, i.e., its residue field has transcendence degree $1$ over $R$, if and only if there are only finitely many simple $\m$-primary $v_{\alpha}$-ideals. If $v_{\alpha}$ is not a prime divisor, then one of the infinitely many simple $\m$-primary $v_{\alpha}$-ideals must be properly contained in $I$.  If it is a prime divisor, then, according to \cite{AJM} (Theorem 4.4), $\bqf{\alpha}{\beta}$ contains a simple $\m$-primary $v_{\alpha}$-ideal, which is therefore properly contained in $I$.
	
	From the fact that $I$ properly contains a simple $\m$-primary $v_{\alpha}$-ideal, one concludes that $I/I^{\alpha} \cong R$: By Theorem \ref{ZarQuadTrans}, there is a sequence of quadratic transformations $A = A^{(0)} \prec \cdots A^{(s)}$ along $v_{\alpha}$ such that this simple $\m$-primary $v_{\alpha}$-ideal is transformed into the maximal ideal $\m^{(s)}$ of $A^{(s)}$, and therefore $I$ is transformed into $A^{(s)}$ (Lemma \ref{IdPropTrans}). Since $A^{(s)} / \m^{(s)}$ is a real algebraic extension of $R$, they are equal. Thus, by Remark \ref{viq2}, we have $I / I^{\alpha} \cong R$.
	(Note that if $v_{\alpha}$ is not a prime divisor, then $K_{v_{\alpha}} = R$, and therefore $I / I^{\alpha} \cong R$ already follows from Remark \ref{viq1}.)
	
	By Proposition \ref{Simp}, we have that $\bqf{\alpha}{\beta}$ is simple. It is true in general that $T(\bqf{\alpha}{\beta}) \subset \bqf{\alpha'}{\beta'}$ (\cite{AJM}, Lemma 3.2), and with the considerations above one easily shows that $W(\bqf{\alpha'}{\beta'}) \subset \bqf{\alpha}{\beta}$ (\cite{AJM}, Lemma 4.7). Applying $W$ on the first inclusion and $T$ on the second, one gets, by Theorem \ref{ZarQuadTrans}, $\bqf{\alpha}{\beta} \subset W(\bqf{\alpha'}{\beta'})$ and $\bqf{\alpha'}{\beta'} \subset T(\bqf{\alpha}{\beta})$. Altogether, we have the desired equalities.
	
	Again using Theorem \ref{ZarQuadTrans}, the last assertion follows from the simplicity of $\bqf{\alpha}{\beta}$.

\end{proof}

\section{The Connectedness Conjecture for Smooth Affine Surfaces over Real Closed Fields}

In this last section, we shall prove the Connectedness Conjecture for the coordinate ring of a non-singular two-dimensional affine real algebraic variety over a real closed field. That is, we shall show that any two points in the real spectrum of such a coordinate ring which have the same center satisfy the connectedness condition. We can assume that the variety is irreducible. By Lemma \ref{CCone}, we then need to consider only points $\alpha,\beta$ where $\sqrt{\bqf{\alpha}{\beta}}$ has height two. We will use Theorem \ref{AJMQuadTrans} to simplify the problem of constructing a suitable connected set.

Let $A = (A,\m,R)$ be a two-dimensional regular local domain such that $R = A / \m$ is real closed. Let $\alpha,\beta \in \Sper{A}$ such that $\cent{\alpha} = \m = \cent{\beta}$ and $\bqf{\alpha}{\beta} \subsetneq \m = \sqrt{\bqf{\alpha}{\beta}}$. By Theorem \ref{AJMQuadTrans}, there exists a finite sequence $(A,\m,R) =: (A^{(0)},\m^{(0)},k^{(0)}) \prec \cdots \prec (A^{(r)},\m^{(r)},k^{(r)})$ of quadratic transformations along the valuation $v_{\alpha}$ such that $\bqf{\alpha^{(r)}}{\beta^{(r)}} = T^{(r)}(\bqf{\alpha}{\beta}) = \m^{(r)}$, where $\alpha^{(r)}$ and $\beta^{(r)}$ are the unique extensions of $\alpha$ and $\beta$ to $A^{(r)}$.

At first we take a closer look at these quadratic transformations. The quadratic transformation $(A^{(i)},\m^{(i)},k^{(i)}) \prec (A^{(i+1)},\m^{(i+1)},k^{(i+1)})$ is a transformation along $v_{\alpha^{(i)}}$, which is the unique extension of $v_{\alpha}$ to $A^{(i)}$. We will write $v$ instead of $v_{\alpha^{(i)}}$ and sometimes $\alpha$ and $\beta$ instead of $\alpha^{(i)}$ and $\beta^{(i)}$.

By Theorem \ref{ZarQuadTrans}, we have that, for all $i \in \{ 0,\ldots,r \}$, the regular local ring $A^{(i)}$ has dimension two and residue field $k^{(i)} = R$, since $k^{(i)}$ is a real field. Now let $(x_{i},y_{i})$ be a regular system of local parameters of $A^{(i)}$, i.e., $m^{(i)} = (x_{i},y_{i})$. From now on, we will assume that $0 < v(x_{i}) \le v(y_{i})$. Then, a regular system $(x_{i+1},y_{i+1})$ of parameters of $A^{(i+1)}$ such that $0 < v(x_{i+1}) \le v(y_{i+1})$ can be derived from $(x_{i},y_{i})$ in the following way (see \cite{ZS2}, Appendix 5, proof of Proposition 1):

\begin{enumerate}

	\item[I.]{Suppose $v(x_{i}) < v(y_{i})$:
		\begin{enumerate}
			\item[1.]{If $v(x_{i}) \le v(\frac{y_{i}}{x_{i}})$, then let  $x_{i+1} := x_{i}$ and $y_{i+1} := \frac{y_{i}}{x_{i}}$.}
			\item[2.]{If $v(x_{i}) > v(\frac{y_{i}}{x_{i}})$, then let $x_{i+1} := \frac{y_{i}}{x_{i}}$ and $y_{i+1} := x_{i}$.}
		\end{enumerate}}
	\item[II.]{Suppose $v(x_{i}) = v(y_{i})$. Pick an element $u \in {A^{(i)}}^{\times}$ such that $\frac{y_{i}}{x_{i}} - u$ has positive value. (This is possible because $v(\frac{y_{i}}{x_{i}}) = 0$, hence there exists some element $u \in {A^{(i)}}^{\times}$ such that $\overline{u}^{v} = \overline{\frac{y_{i}}{x_{i}}}^{v} \in R$.)
		\begin{enumerate}
			\item[1.] {If $v(x_{i}) \le v(\frac{y_{i}}{x_{i}} - u)$, let $x_{i+1} := x_{i}$ and $y_{i+1} := \frac{y_{i}}{x_{i}} - u$.}
			\item[2.]{If $v(x_{i}) > v(\frac{y_{i}}{x_{i}} - u)$, let $x_{i+1} := \frac{y_{i}}{x_{i}} - u$ and $y_{i+1} := x_{i}$.}
		\end{enumerate}}

\end{enumerate}

Without loss of generality we may assume that $x_{i}(\alpha) > 0$ and $y_{i}(\alpha) > 0$. Since $\bqf{\alpha^{(i)}}{\beta^{(i)}} \subsetneq \m^{(i)}$ for all $i < r$, we also have $x_{i}(\beta) > 0$ if $i < r$.

\begin{proposition}

	Suppose $A$ is the localization at the maximal ideal $(\enum{z})$ of a finitely generated two-dimensional regular $R$-algebra $R[\enum{z}]$ without zero divisors, where $R$ is a real closed field. Let $\alpha,\beta \in \Sper{A}$ both centered at the maximal ideal $\m$ of $A$ such that $\bqf{\alpha}{\beta} \subsetneq \m = \sqrt{\bqf{\alpha}{\beta}}$. Let $v := v_{\alpha}$. Suppose $0 < v(z_{1}) \le v(z_{2}), \ldots, v(z_{n})$. Then there exist elements $u_{2}, \ldots, u_{n} \in R$ such that the quadratic transform $A'$ of $A$ along $v$ equals the localization of $R[\enum{z'}]$ at the maximal ideal $(\enum{z'})$, where $z_{1}' := z_{1}$ and $z_{j}' = \frac{z_{j}}{z_{1}} - u_{j}$ if $j > 1$. Suppose further that $(z_{1},z_{2})$ is a regular system of parameters of $A$, then $(z_{1}',z_{2}')$ is a regular system of parameters of $A'$.

\label{FinGenQT}
\end{proposition}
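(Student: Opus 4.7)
The plan is to unwind the definition of the quadratic transform of $A$ along $v$. Since $v(z_1) \le v(z_j)$ for every $j$, the element $z_1 \in \m$ has minimal $v$-value, so we take $x := z_1$. Then $A[z_1^{-1}\m] = A[z_2/z_1, \ldots, z_n/z_1]$ and $A' = S^{-1}A[z_1^{-1}\m]$ with $S = \{a : v(a) = 0\}$. For each $j \ge 2$ the ratio $z_j/z_1$ has non-negative $v$-value and hence lies in $A'$, so its image in the residue field $A'/\m'$ is well defined. By Theorem \ref{ZarQuadTrans}, $A'/\m'$ is an algebraic extension of $R$; since $R$ is real closed and $A'/\m'$ is a real field, $A'/\m' = R$, so there exist $u_j \in R$ with $v(z_j/z_1 - u_j) > 0$. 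Define $z_1' := z_1$ and $z_j' := z_j/z_1 - u_j$ for $j \ge 2$.

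Next, I would observe that $B := R[z_1',\ldots,z_n']$ coincides with $R[z_1, z_2/z_1, \ldots, z_n/z_1]$, since the two generating sets differ only by elements of $R$. The center $\p$ of $v$ in $B$ contains every $z_j'$ by construction, and $B/(z_1',\ldots,z_n') = R$, so this ideal is already maximal; hence $\p = (z_1',\ldots,z_n')$. It remains to show $A' = B_\p$. The inclusion $B_\p \subseteq A'$ follows from $B \subseteq A[z_1^{-1}\m]$ together with $B \setminus \p \subseteq S$. Conversely, every element of $R[\enum{z}] \setminus (\enum{z})$ has $v$-value $0$ (because $v$ dominates $A$) and lies in $B$, hence in $B \setminus \p$, giving $A \subseteq B_\p$ and thus $A[z_1^{-1}\m] \subseteq B_\p$. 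Finally, any $s \in S$ can be written as $b/b'$ with $b, b' \in B$ and $b' \notin \p$; since $v(b') = 0$ forces $v(b) = 0$, both $b$ and $b'$ are units in $B_\p$, so $s$ is as well, and we obtain $A' = S^{-1}A[z_1^{-1}\m] \subseteq B_\p$.

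For the last assertion, assume $(z_1, z_2)$ is a regular system of parameters of $A$. Then for each $j \ge 3$ one can write $z_j = a_{j,1} z_1 + a_{j,2} z_2$ with $a_{j,1}, a_{j,2} \in A$, and substitution gives in $A'$
\[ z_j' \;=\; \frac{z_j}{z_1} - u_j \;=\; (a_{j,1} + a_{j,2} u_2 - u_j) + a_{j,2}\, z_2'. \]
Reducing modulo $\m'$ forces $u_j = \overline{a_{j,1}} + \overline{a_{j,2}}\, u_2$, where $\overline{a_{j,i}}$ denotes the image of $a_{j,i}$ in $A/\m = R$. Hence $a_{j,1} + a_{j,2} u_2 - u_j \in \m$, and since $z_1 = z_1'$ and $z_2 = z_1'(z_2' + u_2)$ both lie in $(z_1')A'$, this summand belongs to $(z_1', z_2')A'$. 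Combined with $a_{j,2} z_2' \in (z_2')A'$, this yields $z_j' \in (z_1', z_2')A'$ for all $j \ge 3$, whence $\m' = (z_1', z_2')A'$. Since $A'$ is regular of dimension two, $(z_1', z_2')$ is a regular system of parameters.

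I expect the main obstacle to be the careful identification $A' = B_\p$: the quadratic transform is defined by first enlarging $A$ to $A[z_1^{-1}\m]$ and then localizing at the value-zero multiplicative set $S$, whereas the target description is a single localization of the finitely generated $R$-algebra $B$ at the prescribed maximal ideal $(z_1',\ldots,z_n')$. Tracking the $v$-values of elements of $R[\enum{z}]$ and of $B$ is what makes the two localizations agree; once this is in place, the residue-field computation in the last paragraph makes the assertion about regular parameters immediate.
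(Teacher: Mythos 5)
Your proof is correct and follows essentially the same route as the paper: take $z_{1}$ as the element of minimal value, choose $u_{j} \in R$ to kill the residue of $\frac{z_{j}}{z_{1}}$ (the paper gets this from $\m/\m^{\alpha} \cong R$, you from $A'/\m' = R$ --- both facts coming from the same earlier results), and identify $A'$ with $R[\enum{z'}]_{(\enum{z'})}$ by tracking $v$-values of numerators and denominators. Your explicit verification of the final assertion about $(z_{1}',z_{2}')$ is welcome extra detail, since the paper dismisses it as immediate from the preceding considerations.
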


\begin{proof}

	Let $j > 1$. If $v(z_{j}) > v(z_{1})$, let $u_{j} := 0$.  If $v(z_{j}) = v({z_{1}})$, then, since $\m / \m^{\alpha} \cong R$ (as shown in the proof of \ref{AJMQuadTrans}), there exists some $u \in R$ such that $v(z_{j} - uz_{1}) > v(\m) = v(z_{1})$, and we take $u_{j} := u$.
	
	Let $B := A[z_{1}^{-1} \m] = \set{\frac{a}{z_{1}^{m}}}{a \in A, \ \ord{A}(a) \ge m}$. Then we have $A' = S^{-1} B$, where $S = \set{b \in B}{v(b) = 0}$. Let $a \in A$. By assumption $a = \frac{a_{1}}{a_{2}}$, where $a_{1} \in R[\enum{z}]$ and $a_{2} \in R[\enum{z}] \setminus (\enum{z})$. Let $m \in \N$ such that $m \le \ord{A}(a) = \ord{A}(a_{1})$, hence $a_{1}' := \frac{a_{1}}{z_{1}^{m}} \in R[z_{1}, \frac{z_{2}}{z_{1}}, \ldots, \frac{z_{n}}{z_{1}}] = R[\enum{z'}]$. Further $a_{2} \in R[\enum{z}] \setminus (\enum{z}) \subset R[\enum{z'}] \setminus (\enum{z'})$. Hence $B \subset R[\enum{z'}]_{(\enum{z'})}$. The valuation $v$ extends uniquely to $R[\enum{z'}]$, it is non-negative on this ring, and its center is the maximal ideal $(\enum{z'})$, therefore it also extends uniquely to $R[\enum{z'}]_{(\enum{z'})}$
	
	Suppose now that $v(\frac{a}{z_{1}^{m}}) = 0$, i.e., $v(a_{1}) = v(a) = m v(z_{1})$. Then we have that $v(a_{1}') = v(a_{2}) = 0$. Since $v$ is centered on $(\enum{z'})$ in $R[\enum{z'}]$, we have that $a_{1}' \in R[\enum{z'}] \setminus (\enum{z'})$. Thus, we have shown that $A' = S^{-1} B = R[\enum{z'}]_{(\enum{z'})}$.
	
	The last assertion follows immediately from the considerations we made above.

\end{proof}

\begin{lemma}

	Let $\alpha,\beta \in \Sper{A}$ such that $\cent{\alpha} = \m = \cent{\beta}$ and $\bqf{\alpha}{\beta} \subsetneq \m = \sqrt{\bqf{\alpha}{\beta}}$. Let $(A,\m,R) =: (A^{(0)},\m^{(0)},R) \prec \cdots \prec (A^{(r)},\m^{(r)},R)$ be the sequence of quadratic transformations along $v := v_{\alpha}$ such that $T^{(r)}(\bqf{\alpha}{\beta}) = \m^{(r)} = \bqf{\alpha^{(r)}}{\beta^{(r)}}$. Then every $g \in A \setminus \bqf{\alpha}{\beta}$ has the form $x_{r}^{e} y_{r}^{f} w$, where $(x_{r},y_{r})$ is a regular system of parameters of $A^{(r)}$, $w \in A^{(r)} \setminus \m^{(r)}$, $e,f \in \N$, and $e = 0$ (resp. $f = 0$) if $x_{r}$ (resp. $y_{r}$) changes sign between $\alpha$ and $\beta$.

\label{LocUni}
\end{lemma}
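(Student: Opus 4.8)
The plan is to push $g$ into $A^{(r)}$ along the blow-up tower and read off the factorisation there, as in the proof of Theorem \ref{AJMQuadTrans}. If $g \notin \m$, then $g$ is a unit of $A$, hence of $A^{(r)}$, and we are done with $e=f=0$, $w=g$ (the sign hypotheses being vacuous). So assume $g \in \m \setminus \bqf{\alpha}{\beta}$ and put $v := v_{\alpha}$. Since $\bqf{\alpha}{\beta}$ is $\alpha$-convex, it is a $v$-ideal, so $g \notin \bqf{\alpha}{\beta}$ gives $v(g) < v(\bqf{\alpha}{\beta})$; thus the smallest $v$-ideal of $A$ containing $g$, namely $I := \set{a \in A}{v(a) \ge v(g)}$, properly contains $\bqf{\alpha}{\beta}$, and it is $\m$-primary because $g \in \m$. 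As shown inside the proof of Theorem \ref{AJMQuadTrans}, $\bqf{\alpha}{\beta}$ is a \emph{simple} $\m$-primary $v$-ideal; hence Lemma \ref{IdPropTrans}, applied to $\mathcal I = \bqf{\alpha}{\beta}$ and the tower $A = A^{(0)} \prec \cdots \prec A^{(r)}$ along $v$ (for which $T^{(r)}(\bqf{\alpha}{\beta}) = \m^{(r)}$), yields $T^{(r)}(I) = A^{(r)}$.

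Now follow $g$ up the tower. Put $\delta_i := \ord{A^{(i)}}\!\big(T^{(i)}(I)\big)$. Since $g \in I$ we have $\ord{A}(g) \ge \delta_0$, so $g/x_0^{\delta_0} \in A[x_0^{-1}\m]$ and in fact $g/x_0^{\delta_0} \in T(I)$; iterating, $h := g\,/\,(x_0^{\delta_0}\cdots x_{r-1}^{\delta_{r-1}})$ lies in $T^{(r)}(I) = A^{(r)}$. A value count shows $h$ is a unit: each transform from level $i$ to level $i+1$ divides by $x_i^{\delta_i}$, so the $v$-ideal $T^{(r)}(I) = A^{(r)}$ has value $v(I) - \sum_{i<r}\delta_i v(x_i)$, and this value is $0$ since $1 \in A^{(r)}$; as $v(g) = v(I)$, we get $v(h) = v(g) - \sum_{i<r}\delta_i v(x_i) = 0$, and since $v$ dominates $A^{(r)}$ this forces $h \in A^{(r)} \setminus \m^{(r)}$. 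Thus $g = x_0^{\delta_0}\cdots x_{r-1}^{\delta_{r-1}}\,h$ in $A^{(r)}$. Finally, the recipes I.1--II.2 show that $x_i$ and $y_i$, read in $A^{(i+1)}$, are monomials in $x_{i+1},y_{i+1}$ times a unit of $A^{(i+1)}$, so a downward induction on $i$ gives, for each $i<r$, $x_i = x_r^{a_i}y_r^{b_i}w_i$ in $A^{(r)}$ with $a_i,b_i \in \N$ and $w_i \in A^{(r)} \setminus \m^{(r)}$. Substituting yields $g = x_r^{e}y_r^{f}w$ with $e = \sum_{i<r}\delta_i a_i$, $f = \sum_{i<r}\delta_i b_i$ and $w = h\prod_{i<r}w_i^{\delta_i} \in A^{(r)} \setminus \m^{(r)}$, the asserted form.

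It remains to check the sign conditions, and I expect this to be the main obstacle. Since $g \notin \bqf{\alpha}{\beta}$, $g$ does not change sign between $\alpha$ and $\beta$ (otherwise $g$ or $-g$ is a generator of $\bqf{\alpha}{\beta}$); we may assume $g(\alpha)>0$, so $w(\alpha)>0$ (as $x_r(\alpha),y_r(\alpha)>0$), hence $w(\beta)>0$ because a unit of $A^{(r)}$ cannot change sign ($\bqf{\alpha^{(r)}}{\beta^{(r)}} = \m^{(r)} \neq A^{(r)}$); likewise the $w_i$ and $h$ do not change sign, and since $x_i(\beta)>0$ for $i<r$ also $g(\beta)>0$. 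Now suppose $x_r$ changes sign, say $x_r(\beta)<0$; we must show $e=0$, i.e.\ $a_i=0$ whenever $\delta_i>0$. If $x_r \mid x_i$ in $A^{(r)}$ for such an $i$, then $x_i/x_r \in A^{(r)}$ has $(x_i/x_r)(\alpha)>0$ and $(x_i/x_r)(\beta)=x_i(\beta)/x_r(\beta)<0$, so $x_i/x_r \in \bqf{\alpha^{(r)}}{\beta^{(r)}} = \m^{(r)}$; one must then feed this back down the tower and see it is impossible -- the factor $x_r$ in $x_i$ arises at some level $j$ ($i \le j < r$) from a splitting $y_j \mapsto x_{j+1}y_{j+1}$ with $x_{j+1}$ an ancestor of $x_r$, and then the recipes I.1--II.2 express $x_r(\beta)$ as a ratio of quantities that are positive by the inequalities $x_k(\beta)>0$ ($k<r$), contradicting $x_r(\beta)<0$. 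Hence no such $i$ exists and $e=0$; the case $y_r(\beta)<0$, forcing $f=0$, is symmetric. The content of the lemma really sits in this last step: everything else is bookkeeping with ideal transforms and $v$-values, while the sign statement exploits the propagation of the positivity constraints $x_k(\beta)>0$ down the quadratic-transformation tower, best organised as a downward induction along $A^{(r)} \succ \cdots \succ A^{(0)}$ recording, for each $j$, whether $x_r \mid x_j$ (and $y_r \mid x_j$) in $A^{(r)}$ and which case occurs at level $j$.
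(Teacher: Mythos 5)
Your first two paragraphs reproduce the paper's argument: forming the $v_{\alpha}$-ideal $I=\set{a \in A}{v(a)\ge v(g)}$, invoking Lemma \ref{IdPropTrans} to get $T^{(r)}(I)=A^{(r)}$, dividing $g$ successively by $x_{i}^{\nu_{i}}$ to write $g=x_{0}^{\nu_{0}}\cdots x_{r-1}^{\nu_{r-1}}w'$ with $w'$ a unit of $A^{(r)}$, and then rewriting each $x_{i}$ as a monomial in $x_{r},y_{r}$ times a unit via the recipes I.1--II.2. That part is correct and is exactly what the paper does.

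The gap is in the sign conditions, which (as you suspect) is where the real content of the lemma lies, and your sketch for it does not work as stated. First, from $a_{i}\ge 1$ you only get $x_{i}/x_{r}=x_{r}^{a_{i}-1}y_{r}^{b_{i}}w_{i}\in\bqf{\alpha^{(r)}}{\beta^{(r)}}=\m^{(r)}$, which is not absurd by itself: $\m^{(r)}$ has plenty of elements, so no contradiction is reached at that point. Second, the mechanism you propose for the contradiction --- that the recipes express $x_{r}(\beta)$ as a ratio of the quantities $x_{k}(\beta)>0$, $k<r$ --- cannot be right, because if positivity of the $x_{k}(\beta)$ alone forced $x_{r}(\beta)>0$, then $x_{r}$ could never change sign at all, contrary to the situation the lemma explicitly allows. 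Indeed $x_{r}$ need not be a ratio of earlier parameters: in a type II step it is $\frac{y_{r-1}}{x_{r-1}}-u$ (a genuinely new element), and in a type I.2 step it is $\frac{y_{r-1}}{x_{r-1}}$ where $y_{r-1}$ itself may change sign. What is actually needed is the structural analysis the paper carries out: an element can change sign only if it lies in the separating ideal, and since $\bqf{\alpha^{(i)}}{\beta^{(i)}}\subsetneq\m^{(i)}$ for $i<r$, once a parameter acquires a sign change the remaining transformations are forced --- either the tower terminates immediately ($i+1=r$, when the sign-changing element has minimal value in $\m^{(i+1)}$), or one gets a chain of I.1 transformations with at most one final I.2. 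From this one reads off that the sign-changing parameter is created essentially at the last step ($x_{r}$ via II.2 when no $y_{i}$ changes sign; $y_{r}$, or $x_{r}$ after a final I.2, when some $y_{j}$ does), and hence it never divides any $x_{i}$ with $i<r$, so $e=0$ (resp.\ $f=0$) holds by inspection of the monomial $x_{0}^{\nu_{0}}\cdots x_{r-1}^{\nu_{r-1}}$, not by the contradiction you aim for. This case analysis is missing from your proposal, and without it the lemma is not proved.
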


\begin{proof}

	Let $g \in A \setminus \bqf{\alpha}{\beta}$. Let $I := \set{a \in A}{v_{\alpha}(a) \ge v_{\alpha}(g)} = \set{a \in A}{v_{\beta}(a) \ge v_{\beta}(g)}$. Since $g \notin \bqf{\alpha}{\beta}$, we have that $T^{(r)}(I) = A^{(r)}$, by Lemma \ref{IdPropTrans}.\\
	Let $\nu_{0} = \ord{A}{I}$ be the order of the ideal $I$. Since $g$ has minimal value in $I$, the element $g \cdot x_{0}^{-\nu_{0}}$ has minimal value in the transform $T(I)$ of $I$. By induction, one shows that there exist $\nu_{i} \in \N$ ($0 \le i < r$) such that $g \cdot x_{0}^{-\nu_{0}} \cdots x_{r-1}^{-\nu_{r-1}}$ has minimal value in $T^{(r)}(I) = A^{(r)}$, hence
	\[ g = x_{0}^{\nu_{0}} \cdots x_{r-1}^{\nu_{r-1}} w' \]
	for some  for  and some unit $w'$ in $A^{(r)}$.
	
	We would like to write $g$ in the form $x_{r}^{e} y_{r}^{f} w$, where $w \in{ A^{(r)}}^{\times}$, $e,f \in \N$, and $e = 0$ (resp. $f = 0$) if $x_{r}$ (resp. $y_{r}$) changes sign between $\alpha$ and $\beta$. In order to see how this can be done, we have to know for all $i < r$ how to represent a product $x_{i}^{s} y_{i}^{t}$ in terms of $x_{i+1}$ and $y_{i+1}$. We will look again at the several cases of a quadratic transformation along $v$.
	
	\begin{enumerate}
	
		\item[I.1]{If $x_{i} = x_{i+1}$ and $y_{i} = x_{i+1}y_{i+1}$, then $x_{i}^{s} y_{i}^{t} = x_{i+1}^{s+t} y_{i+1}^{t}$.}
		\item[I.2]{If $x_{i} = y_{i+1}$ and $y_{i} = x_{i+1}y_{i+1}$, then $x_{i}^{s} y_{i}^{t} = x_{i+1}^{t} y_{i+1}^{s+t}$.}
		\item[II.1]{If $x_{i} = x_{i+1}$ and $y_{i} = x_{i+1}(y_{i+1} + u)$ for some $u \in {A^{(i)}}^{\times}$, then we have $x_{i}^{s} y_{i}^{t} = x_{i+1}^{s+t} (y_{i+1} + u)^{t}$. Note that $y_{i+1} + u$ is a unit in $A^{(i+1)}$.}
		\item[II.2]{If $x_{i} = y_{i+1}$ and $y_{i} = y_{i+1}(x_{i+1} + u)$ for some $u \in {A^{(i)}}^{\times}$, then we have $x_{i}^{s} y_{i}^{t} = y_{i+1}^{s+t} (x_{i+1} + u)^{t}$, and $x_{i+1} + u$ is a unit in $A^{(i+1)}$.}
	
	\end{enumerate}
	
	Let $0 \le i < r$. If both $x_{i}$ and $y_{i}$ are positive at $\alpha$ and $\beta$ and we are in case I.1 or I.2, then $x_{i+1}$ and $y_{i+1}$ are also positive at $\alpha$ and $\beta$. If we are in case II.1 or II.2, it is possible that $\frac{y_{i}}{x_{i}} - u$ changes sign between $\alpha$ and $\beta$. In case II.2, we then have $x_{i+1} = \frac{y_{i}}{x_{i}} - u \in \bqf{\alpha^{(i+1)}}{\beta^{(i+1)}}$, and, since $x_{i+1}$ has minimal value in $\m^{(i+1)}$ and $\m^{(i+1)}$ contains $\bqf{\alpha^{(i+1)}}{\beta^{(i+1)}}$, this yields $\m^{(i+1)} = \bqf{\alpha^{(i+1)}}{\beta^{(i+1)}}$, thus $i+1 = r$. In case II.1, if $v(x_{i+1}) = v(x_{i}) = v(\frac{y_{i}}{x_{i}} - u) = v(y_{i+1})$ and $y_{i+1} = \frac{y_{i}}{x_{i}} - u \in \bqf{\alpha^{(i+1)}}{\beta^{(i+1)}}$, we can use the same arguments as in the last sentence to show that $i+1 = r$. In the same case, if $v(x_{i}) < v(\frac{y_{i}}{x_{i}} - u)$ and $y_{i+1} = \frac{y_{i}}{x_{i}} - u \in \bqf{\alpha^{(i+1)}}{\beta^{(i+1)}}$, we will reach $A^{(r)}$ after a series of I.1 transformations and maybe one last I.2 transformation. The same holds if $y_{0}$ changes sign between $\alpha$ and $\beta$.
	
	We will now consider the implications of the last considerations for the representation of an element $g \in A \setminus \bqf{\alpha}{\beta}$ in terms of $x_{r}$ and $y_{r}$ in the ring $A^{(r)}$.
	
	1. Suppose, for all $i \le r$, both $x_{i}$ and $y_{i}$ are positive at $\alpha$ and $\beta$. Then, in the representation $g = x_{r}^{e} y_{r}^{f} w$ with $e,f \in \N$ and $w \in{ A^{(r)}}^{\times}$ of some element $g \in A \setminus \bqf{\alpha}{\beta}$, we do not care whether $e$ or $f$ is zero or not.
	
	2. Now suppose $y_{j}$ changes sign between $\alpha$ and $\beta$ for some $j$ and $j$ is minimal with this property. Hence, if $j > 0$, the last quadratic transformation from $A^{(j-1)}$ to $A^{(j)}$ must be of type II.1. In particular, if $j > 0$, $x_{j} = x_{j-1}$ does not change its sign between $\alpha$ and $\beta$, and we have $x_{0}^{\nu_{0}} \cdots x_{j-1}^{\nu_{j-1}} = x_{j}^{s} (y_{j} + u)^{t}$, where $s,t \in \N$, $u$ a unit in $A^{(j-1)}$, and $y_{j} + u$ is a unit in $A^{(j)}$.\\
	As mentioned above, we have $j = r$ or we will reach $A^{(r)}$ after a series of I.1 transformations and maybe one last I.2 transformation. Thus, if $j < r$, we have $x_{j} = x_{j+1} = \cdots = x_{r-1}$ and each $g \in A \setminus \bqf{\alpha}{\beta}$ has the form $x_{r}^{e} w$ or $y_{r}^{f} w$ with $w \in{ A^{(r)}}^{\times}$, depending on whether $x_{r-1} = x_{r}$ (I.1) or $x_{r-1} = y_{r}$ (I.2). Note that in these representations none of the factors changes sign between $\alpha$ and $\beta$. If $j = r$, then $x_{r-1} = x_{r}$, and therefore each $g \in A \setminus \bqf{\alpha}{\beta}$ has the form $x_{r}^{e} w$ with $w \in{ A^{(r)}}^{\times}$, and $x_{r}$ does not change its sign.
	
	3. Suppose that, for all $i \le r$, $y_{i}$ does not change sign between $\alpha$ and $\beta$, but $x_{r}$ changes sign between these two points. Then, as seen above, the last transformation must be of type II.2, hence $g = y_{r}^{f} w$ with $w \in{ A^{(r)}}^{\times}$.
	
	Note that the last discussion also showed that at least one of the elements $x_{r}$ and $y_{r}$ does not change its sign between $\alpha$ and $\beta$.

\end{proof}

Finally, we are able to prove our main result.

\begin{theorem}

	Let $W$ be a non-singular two-dimensional affine real algebraic variety over a real closed field $R$, and let $\mathcal{P}(W)$ be its coordinate ring. Then every pair of points $\alpha,\beta \in \Sper{\mathcal{P}(W)}$ having a common center satisfies the connectedness condition. In particular, $\mathcal{P}(W)$ is Pierce-Birkhoff.

\end{theorem}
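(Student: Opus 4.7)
The plan is to verify the connectedness condition for every pair $\alpha, \beta \in \Sper{\mathcal{P}(W)}$ having a common center; the conclusion that $\mathcal{P}(W)$ is Pierce-Birkhoff then follows from the Lucas-Madden-Schaub-Spivakovsky theorem. Since $W$ is non-singular, its irreducible components are pairwise disjoint and $\mathcal{P}(W)$ decomposes as a direct product of their coordinate rings, so it suffices to treat the case where $W$ is irreducible and $\mathcal{P}(W)$ is a two-dimensional regular noetherian domain over $R$. Set $\p := \sqrt{\bqf{\alpha}{\beta}}$. If $\p$ has height zero then $\alpha = \beta$ and $C := \{\alpha\}$ suffices; if $\p$ has height one, Lemma \ref{CCone} applies, since the localization of a non-singular surface coordinate ring at a height-one prime is a discrete valuation ring. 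It remains to handle the case in which $\p$ is a maximal ideal $\m$: the induced ordering makes the finitely generated $R$-algebra $\mathcal{P}(W)/\m$ a real field, and by the Nullstellensatz this field is algebraic over the real closed $R$, hence equal to $R$; so $\m$ corresponds to an $R$-point $p$ of $W$.

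Localize at $\m$: the ring $A := \mathcal{P}(W)_\m$ is a two-dimensional regular local domain with residue field $R$. If $\bqf{\alpha}{\beta} = \m$, every $g_j \in \mathcal{P}(W) \setminus \m$ is a unit at $p$, and a sufficiently small semialgebraic ball around $p$ in $W(R)$—connected because $p$ is non-singular and the implicit function theorem presents a neighborhood of $p$ in $W(R)$ as a convex semialgebraic subset of $R^2$—yields via Proposition \ref{SemiConnSpec} a constructible connected set containing $\alpha, \beta$ and disjoint from each $\{g_j = 0\}$. Assume henceforth $\bqf{\alpha}{\beta} \subsetneq \m$. Theorem \ref{AJMQuadTrans} then produces a sequence $A = A^{(0)} \prec \cdots \prec A^{(r)}$ of quadratic transformations along $v_\alpha$ with $\bqf{\alpha^{(r)}}{\beta^{(r)}} = \m^{(r)}$. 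Iterating Proposition \ref{FinGenQT}, each $A^{(i)}$ arises as the localization at its maximal ideal of a finitely generated subring $\mathcal{P}(W^{(i)}) \subset \Quot{\mathcal{P}(W)}$, which I regard as the coordinate ring of an affine $R$-variety $W^{(i)}$. The inclusions $\mathcal{P}(W) \hookrightarrow \mathcal{P}(W^{(i)})$ induce continuous restriction maps $\pi_i^\ast \colon \Sper{\mathcal{P}(W^{(i)})} \to \Sper{\mathcal{P}(W)}$ sending $\alpha^{(i)}, \beta^{(i)}$ back to $\alpha, \beta$; write $\pi^\ast$ for the composite $\Sper{\mathcal{P}(W^{(r)})} \to \Sper{\mathcal{P}(W)}$.

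Choose the regular system of parameters $(x_r, y_r)$ of $A^{(r)}$ as in Lemma \ref{LocUni}, normalized so that $x_r(\alpha^{(r)}), y_r(\alpha^{(r)}) > 0$. By that lemma, each $g_j \in \mathcal{P}(W) \setminus \bqf{\alpha}{\beta}$ has the form $g_j = x_r^{e_j} y_r^{f_j} w_j$ in $A^{(r)}$ with $w_j \in (A^{(r)})^\times$, where $e_j = 0$ (resp. $f_j = 0$) whenever $x_r$ (resp. $y_r$) changes sign between $\alpha^{(r)}$ and $\beta^{(r)}$, and at most one of these parameters changes sign. Pick $\sigma_x, \sigma_y \in \{\pm 1\}$ matching the common sign at $\alpha^{(r)}$ and $\beta^{(r)}$ of whichever of $x_r, y_r$ is constant-sign. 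For $\varepsilon \in R$ small enough that every $w_j$ is nonvanishing on the set below, define the semialgebraic $S \subset W^{(r)}(R)$ by $x_r^2 + y_r^2 < \varepsilon$ together with $\sigma_x x_r > 0$ whenever $x_r$ is constant-sign and $\sigma_y y_r > 0$ whenever $y_r$ is constant-sign. Because $(x_r, y_r)$ give local parameters at the non-singular point $p^{(r)}$, the implicit function theorem identifies a neighborhood of $p^{(r)}$ in $W^{(r)}(R)$ with a neighborhood of the origin in $R^2$; under this identification $S$ is a convex region (an open disk, half-disk, or open quadrant). Propositions \ref{ConvSemiConn} and \ref{ContSemiConn} then show that $S$ is semialgebraically connected, and Proposition \ref{SemiConnSpec} gives that $\tilde S \subset \Sper{\mathcal{P}(W^{(r)})}$ is connected.

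Both $\alpha^{(r)}, \beta^{(r)}$ lie in $\tilde S$: the inequality $x_r^2 + y_r^2 < \varepsilon$ holds because $x_r, y_r \in \m^{(r)}$ have positive values with respect to both $v_{\alpha^{(r)}}$ and $v_{\beta^{(r)}}$, and the remaining sign conditions hold by the choice of $\sigma_x, \sigma_y$. On $\tilde S$ each $g_j$ is nonzero: $w_j$ is a unit that does not vanish on $S$, and any parameter in $\{x_r, y_r\}$ that can vanish on $S$ has zero exponent in every $g_j$ by Lemma \ref{LocUni}. Thus $C := \pi^\ast(\tilde S) \subset \Sper{\mathcal{P}(W)}$ is connected, contains $\alpha, \beta$, and avoids every $\{g_j = 0\}$, completing the verification. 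The main technical obstacle is the reconciliation of sign constraints in the construction of $S$: I must drop the sign constraint on whichever parameter changes sign, and the fact that this does not introduce zeros of the $g_j$ relies on the sharp form of Lemma \ref{LocUni} pairing each sign change with a vanishing exponent.
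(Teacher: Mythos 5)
Your overall strategy is the same as the paper's (reduce to the height-two case, apply Theorem \ref{AJMQuadTrans} and Proposition \ref{FinGenQT} to realize $A^{(r)}$ as a localized finitely generated algebra, use Lemma \ref{LocUni} to factor the $g_j$, build a connected constructible set near $p^{(r)}$ via the semialgebraic implicit function theorem, and push it down along $\Sper{}(\pi)$), but there is a genuine gap in your construction of the connected set in the main case. You define $S \subset W^{(r)}(R)$ by $x_r^2 + y_r^2 < \varepsilon$ together with sign conditions on whichever of $x_r,y_r$ has constant sign, and then assert that the implicit-function-theorem chart identifies $S$ with a convex region. That identification is only valid for the part of $S$ inside the chart neighborhood of $p^{(r)}$: your inequalities constrain only $x_r$ and $y_r$, not the remaining coordinates $z^{(r)}_1,\ldots,z^{(r)}_m$, so $S$ also contains whatever points of $W^{(r)}(R)$ lie on other sheets over the small disc (the locus $x_r = y_r = 0$ on $W^{(r)}$ need not reduce to $p^{(r)}$). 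Hence $S$ need not be semialgebraically connected, and your opening clause ``for $\varepsilon$ small enough that every $w_j$ is nonvanishing on the set below'' may be unsatisfiable: writing $w_j = \frac{w_{j1}}{w_{j2}}$ with $w_{j1},w_{j2}$ polynomials not vanishing at $p^{(r)}$, these are units only locally at $p^{(r)}$ and can vanish at other points of $W^{(r)}$ with $x_r = y_r = 0$, which stay adherent to $S$ for every $\varepsilon > 0$.

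The paper closes exactly this gap: it picks a neighborhood $\mathcal{U}$ of $0$ in $R^{m+2}$ (bounding all coordinates) with $\mathcal{Z}(\prod_{j} w_{j1}w_{j2}) \cap \mathcal{U} = \emptyset$, chooses $\varepsilon$ so that the convex planar set $\mathcal{V}$ lies in $\psi^{-1}(\mathcal{U})$, and works with $D := \widetilde{\psi(\mathcal{V})}$, the constructible set attached to the image of $\mathcal{V}$ under the graph map $\psi$; connectedness then follows from Propositions \ref{ConvSemiConn}, \ref{ContSemiConn} and \ref{SemiConnSpec}, and nonvanishing of the $g_j$ from $w_{j2}g_j = x_r^{e_j}y_r^{f_j}w_{j1}$ on $\mathcal{U}$. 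The price is that membership of $\alpha^{(r)},\beta^{(r)}$ in $D$ needs more than the signs of $x_r,y_r$: one uses that all of $x_r, y_r, z^{(r)}_1,\ldots,z^{(r)}_m$ lie in $\m^{(r)}$, hence are infinitesimal at both orderings, so that the graph relation given by the implicit function theorem holds at $\alpha^{(r)}$ and $\beta^{(r)}$ as well. Your argument is repaired by adding these bounds on the $z^{(r)}_j$ (equivalently, replacing $S$ by $\psi(\mathcal{V})$) and verifying that extra containment; the same caveat applies, more mildly, to your subcase $\bqf{\alpha}{\beta} = \m$, where the ``small semialgebraic ball in $W(R)$'' should likewise be replaced by the graph of $\varphi$ over a small convex planar set. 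The remaining steps of your proposal coincide with the paper's proof.
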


\begin{proof}

	We may assume that $W$ is irreducible. Let $R[x,y,\enumi{z}{m}]$ be the coordinate ring $\mathcal{P}(W) = R[X,Y,\enumi{Z}{m}] / \mathcal{I}(W)$ of $W$, where $R[X,Y,\enumi{Z}{m}]$ is the polynomial ring in $m + 2$ indeterminates over $R$, and $\mathcal{I}(W)$ the prime ideal of $W$ in $R[X,Y,\enumi{Z}{m}]$. Then $\mathcal{P}(W)$ is a finitely generated two-dimensional regular $R$-algebra without zero -divisors. Suppose $x$ and $y$ are algebraically independent over $R$, and that $z_{j} \notin R$ for all $j \in \{ 1,\ldots, m \}$.
	
	Let $\alpha,\beta \in \Sper{\mathcal{P}(W)}$, and assume that they are distinct and have a common center. If the height of $\sqrt{\bqf{\alpha}{\beta}}$ is one, we have shown in Lemma \ref{CCone} that $\alpha$ and $\beta$ satisfy the connectedness condition. Hence, we can suppose that the height of $\sqrt{\bqf{\alpha}{\beta}}$ is two, i.e., $\sqrt{\bqf{\alpha}{\beta}} = \cent{\alpha} = \cent{\beta}$ is a real maximal ideal $\mathcal{M}$ of $\mathcal{P}(W)$, and the residue field $\mathcal{P}(W) / \mathcal{M}$ equals $R$. Therefore, we may assume that $\mathcal{M} = (x,y,\enumi{z}{m})$.
	
	Let $\enumi{g}{s} \in A \setminus \bqf{\alpha}{\beta}$, and let $A$ be the localization of $\mathcal{P}(W)$ at $\mathcal{M} = \sqrt{\bqf{\alpha}{\beta}}$. Let $\bqf{\alpha}{\beta}_{A}$ be the separating ideal of (the unique extensions of) $\alpha$ and $\beta$ in $A$. Then $\bqf{\alpha}{\beta}_{A} \cap \mathcal{P}(W) = \bqf{\alpha}{\beta}$. Hence $\enumi{g}{s} \notin \bqf{\alpha}{\beta}_{A}$.
	
	Let $(A,\m,R) =: (A^{(0)},\m^{(0)},R) \prec \cdots \prec (A^{(r)},\m^{(r)},R)$ be the sequence of quadratic transformations along $v_{\alpha}$ such that $\m^{(r)} = T^{(r)}(\bqf{\alpha^{(0)}}{\beta^{(0)}}) = \m^{(r)} = \bqf{\alpha^{(r)}}{\beta^{(r)}}$. By Proposition \ref{FinGenQT}, $A^{(r)} = R[x_{r},y_{r},\enumi{z^{(r)}}{m}]_{(x_{r},y_{r},\enumi{z^{(r)}}{m})}$ for some regular system of parameters $(x_{r},y_{r})$ of $A^{(r)}$, and $\mathcal{P}(W) \subset R[x_{r},y_{r},\enumi{z^{(r)}}{m}]$. Recall that $x_{r}$ and $y_{r}$ are algebraically independent over $R$, and we assumed that $x_{r}(\alpha) > 0$ and $y_{r}(\alpha) > 0$.
	
	Then, by Lemma \ref{LocUni}, for all $j \in \{ 1,\ldots,s \}$, $g_{j}$ has the form $x_{r}^{e_{j}} y_{r}^{f_{j}} w_{j}$, where $w_{j} \in A^{(r)} \setminus \m^{(r)}$, and $e_{j},f_{j} \in \N$ are such that $e_{j} = 0$ (resp. $f_{j} = 0$) if $x_{r}$ (resp. $y_{r}$) changes sign between $\alpha$ and $\beta$. The units $w_{j}$ can be written as $\frac{w_{j1}}{w_{j2}}$ with $w_{j1},w_{j2} \in R[x_{r},y_{r},\enumi{z^{(r)}}{m}] \setminus (x_{r},y_{r},\enumi{z^{(r)}}{m})$. Clearly, $0 \notin \mathcal{Z}(\prod\limits_{j = 1}^{s} w_{j1}w_{j2})$, the zero set of this element.
	
	Consider the polynomial ring $R[X,Y,\enumi{Z}{m}]$. Since, $\enumi{z^{(r)}}{m}$ are algebraic over $R(x_{r},y_{r})$, there exists a surjective ring homomorphism $\pi$ from $R[X,Y,\enumi{Z}{m}]$ to $R[x_{r},y_{r},\enumi{z^{(r)}}{m}]$ such that $\mathcal{J} := \pi^{-1}(0)$ is a prime ideal, and $\pi(X) = x_{r}$, $\pi(Y) = y_{r}$ and $\pi(Z_{j}) = z_{j}^{(r)}$ for all $j \in \{ 1, \ldots, m \}$. Let $W^{(r)}$ be the irreducible affine real algebraic variety over $R$ corresponding to $R[x_{r},y_{r},\enumi{z^{(r)}}{m}]$. Then $0$ is a regular point of $W^{(r)}$ with $(x_{r},y_{r})$ as a regular system of local parameters, hence there exist $\enumi{f}{m} \in \mathcal{I}(W^{(r)}) = \mathcal{J} \subset R[X,Y,\enumi{Z}{m}]$ such that the matrix $\left( \frac{\partial f_{j}}{\partial Z_{k}} \right)_{j,k = 1, \ldots, m}$ is invertible. By the semialgebraic implicit function theorem, there exist an open semialgebraic neighborhood $U$ of $0$ in $R^{2}$, an open semialgebraic neighborhood $V$ of $0$ in $R^{m}$ and a continuous semialgebraic map $\varphi\colon U \rightarrow V$ such that $\varphi(0) = 0$ and
	\[ f_{1}(a,b) = \cdots = f_{m}(a,b) = 0 \iff \varphi(a) = b \]
	for every $(a,b) \in U \times V$. Then the map $\psi\colon U \mapsto \mathcal{Z}(\enumi{f}{m}) \cap U \times V$, $a \mapsto (a,\varphi(a))$ is a semialgebraic continuous bijection.
	
	Let $\mathcal{U}$ be an open neighborhood of $0$ in $R^{m+2}$ such that $\mathcal{U} \subset U \times V$, $\mathcal{Z}(\enumi{f}{m}) \cap \mathcal{U} = W^{(r)} \cap \mathcal{U}$, and $\mathcal{Z}(\prod\limits_{j = 1}^{s} w_{j1}w_{j2}) \cap \mathcal{U} = \emptyset$. Let $\varepsilon \in R$ such that $\varepsilon > 0$ and
	\[ \set{t \in R^{2}}{(\varepsilon - x_{r}^{2} - y_{r}^{2})(t) > 0} \subset \psi^{-1}(\mathcal{U}). \]
	Let $I$ be the subset of the two-element set $\{ x_{r},y_{r} \}$ that contains all elements which do not change sign between $\alpha$ and $\beta$, and let $\mathcal{V} := \set{t \in R^{2}}{\varepsilon - \sum_{i = 1}^{2} t_{i}^{2} > 0, \ p(t) > 0 \ (p \in \{ X,Y \}, \ \pi(p) \in I)} \subset \psi^{-1}(\mathcal{U})$. Note that from the proof of Lemma \ref{LocUni}, it follows that $I$ is not empty. By Proposition \ref{ConvSemiConn}, we have that $\mathcal{V}$ is semialgebraically connected, and therefore, $\psi(\mathcal{V}) \subset W^{(r)} \cap \mathcal{U}$ is also semialgebraically connected (Proposition \ref{ContSemiConn}). Hence, by Proposition \ref{SemiConnSpec}, $D := \tilde{\psi(\mathcal{V})}$ is connected in $\Sper{\mathcal{P}(W^{(r)})}$, and $D$ contains $\alpha^{(r)}$ and $\beta^{(r)}$, since $v_{\alpha}(x_{r}), v_{\alpha}(y_{r}), v_{\beta}(x_{r})$ and $v_{\beta}(y_{r})$ are all positive, and since we assumed that $x_{r}(\alpha) > 0$ and $y_{r}(\alpha) > 0$, and therefore they are also positive at $\beta$ if they do not change sign.
	
	Let $j \in \{ 1, \ldots, s \}$. We have $w_{j2}g_{j} = x_{r}^{e_{j}} y_{r}^{f_{j}} w_{j1}$, with $e_{j} = 0$ if $x_{r} \notin I$ and $f_{j} = 0$ if $y_{r} \notin I$, and therefore $g_{j}(\delta') \neq 0$ for all $\delta' \in D$. Consider the natural continuous map $\Sper{}(\pi)\colon \Sper(\mathcal{P}(W^{(r)})) \rightarrow \Sper{\mathcal{P}(W)}$, and set $C := \Sper{}(\pi)(D) = \set{\delta' \cap \mathcal{P}(W)}{\delta' \in D}$. By Proposition \ref{SemiConnSpec}, $D$ is connected, thus $C$ must also be connected. $D$ contains $\alpha^{(r)}$ and $\beta^{(r)}$, and therefore $C$ contains $\alpha$ and $\beta$. Since $g_{j}(\delta') \neq 0$ for all $j \in \{ 1, \ldots, s \}$ and all $\delta' \in D$, we have $C \cap \set{\delta \in \Sper{\mathcal{P}}(W)}{g_{j}(\delta) = 0} = \emptyset$.

\end{proof}

Sven Wagner\\
Universit\"at Konstanz\\
Fachbereich Mathematik und Statistik\\
78457 Konstanz, Germany\\
sven.wagner@uni-konstanz.de

\end{document}